\documentclass[11 pt, article]{IEEEtran}

  \onecolumn
\usepackage{amsmath, amssymb,amsthm,cite}
\usepackage[dvips]{graphics}
\usepackage[utf8x]{inputenc}
\usepackage{graphicx}
\usepackage{psfrag}
\usepackage{bbm}
\usepackage{subfigure}
\usepackage{dblfloatfix}
\usepackage{mathtools}
\usepackage{algpseudocode}
\usepackage{algorithm}
\usepackage{hyperref}
\usepackage{color}
\usepackage{tcolorbox}
\usepackage{etoolbox}
\usepackage{mleftright}

\DeclareMathOperator*{\nn}{\nonumber}

\DeclareMathOperator{\E}{\mathbb{E}}
\allowdisplaybreaks
\newcommand{\RNum}[1]{\uppercase\expandafter{\romannumeral #1\relax}}
\newtheorem{lemma}{Lemma}
\newtheorem{theorem}{Theorem}
\newtheorem{corollary}{Corollary}

\newtheorem{remark}{Remark}
\theoremstyle{definition}

\makeatletter
\def\blfootnote{\gdef\@thefnmark{}\@foot notetext}
\makeatother

\renewcommand{\v}[1]{\mathbf{#1}}
\DeclareMathOperator{\vx}{\mathbf{x}}
\DeclareMathOperator{\vf}{\mathbf{f}}
\DeclareMathOperator{\vu}{\mathbf{u}}
\DeclareMathOperator{\vv}{\mathbf{v}}
\DeclareMathOperator{\vy}{\mathbf{y}}

\DeclareMathOperator{\PG}{\mathbb{G}}
\DeclareMathOperator{\snr}{\text{\normalfont{SNR}}}
\DeclareMathOperator{\tr}{\mathrm{T}}
\DeclareMathOperator{\Trace}{\operatorname{Tr}}

\DeclarePairedDelimiterX{\norm}[1]{\lVert}{\rVert}{#1}

\pdfobjcompresslevel=0

\title{Reducing the LQG Cost with Minimal Communication}

 \author{Oron Sabag, Peida Tian, Victoria Kostina and Babak Hassibi
\thanks{A preliminary version of this work has been published in \cite{sabag_cdc}. The authors are with California Institute of Technology (e-mails:
 \{oron,ptian,vkostina,hassibi\}@caltech.edu).}
}
\begin{document}

\maketitle
\thispagestyle{empty}
\pagestyle{empty}

\begin{abstract}
We study the linear quadratic Gaussian (LQG) control problem, in which the controller's observation of the system state is such that a desired cost is unattainable. To achieve the desired LQG cost, we introduce a communication link from the observer (encoder) to the controller. We investigate the optimal trade-off between the improved LQG cost and the consumed communication (information) resources, measured with the conditional directed information, across all encoding-decoding policies. The main result is a semidefinite programming formulation for that optimization problem in the finite-horizon scenario, which applies to time-varying linear dynamical systems. This result extends a seminal work by Tanaka et al., where the only information the controller knows about the system state arrives via a communication channel, to the scenario where the controller has also access to a noisy observation of the system state. As part of our derivation to show the optimiality of an encoder that transmits a memoryless Gaussian measurement of the state, we show that the presence of the controller's observations at the encoder can not reduce the minimal directed information. For time-invariant systems, where the optimal policy may be time-varying, we show in the infinite-horizon scenario that the optimal policy is time-invariant and can be computed explicitly from a solution of a finite-dimensional semidefinite programming. The results are  demonstrated via examples that show that even low-quality measurements can have a significant impact on the required communication resources.
\end{abstract}


\section{Introduction}

Networked control systems share an inherent tension between the control performance and the resources that are allocated to communicate by different nodes of the system. Despite the great advances on important questions in this theme such as data rate theorems for stabilizability of dynamical systems \cite{ConCom_Elia,ConCom_matveev,ConCom_matveev2,ConCom_Liberzon2,YukselStability,TaikondaSahaiMitter,KostinaBitsArxiv,sabag_isit_fixedrate}, there are still fundamental questions that remain open such as the trade-off between communication resources and the control cost \cite{silva_framework,Milan_UB_causalRDF_concom,victoria_hassibi_tradeoff,anatoly_fixedrate,charalambos_nonanticipative,STAVROU_SI,tishby_LQG}. In this paper, we investigate this question on a simple topology consisting of the classical Linear Quadratic Gaussian (LQG) setting with a single communication link.

\begin{figure}[t]
\centering
    \psfrag{X}[][][1]{$\vx_t$}
    \psfrag{Z}[][][1]{$\vy_t$}
    \psfrag{U}[][][1]{$\vu_t$}
    \psfrag{S}[][][1]{$\vx_{t+1} = A_t\vx_t +B_t \vu_t + \v{w}_t$}
    \psfrag{F}[][][1]{(Encoder)}
    \psfrag{E}[][][1]{Observer}
    \psfrag{C}[][][1]{Controller}
    \psfrag{D}[][][1]{(Decoder)}
    \psfrag{O}[][][1]{Measurement}
    \psfrag{P}[][][.8]{$\vy_{t} = C_t\vx_t+ \vv_t$}
    \psfrag{R}[][][1]{Communication}
    \psfrag{T}[][][1]{The dynamical system}
    \includegraphics[scale = 0.65]{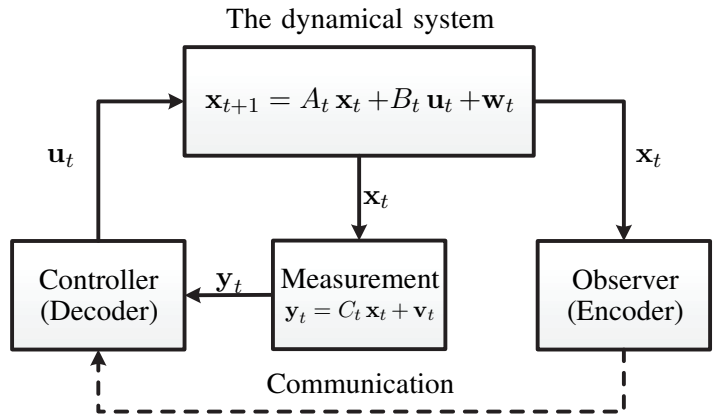}
    \caption{The LQG setting with a noisy observation $\vy_t$. The control performance (the quadratic cost) is improved using a communication link (the dashed line) from the observer to the controller.}
    \label{fig:setting}
\end{figure}


The networked control setting investigated in this paper (Fig. \ref{fig:setting}) aims to reduce the achievable control cost at the expense of communication resources. The communication link introduced between an encoder and a decoder (co-located with the controller) serves as an information pipeline to the controller that also has an access to the LQG measurements $\vy_t$. Based on its (full) observation of the state, the encoder transmits extra information to the controller resulting in a reduction in the LQG cost. One can also view this setting as the standard rate-constrained LQG setting \cite{Tanaka_SDP}, but with side information available to the controller (the measurement $\vy_t$) \cite{Kostina_SI_Allerton,STAVROU_SI,Tanaka_SI,lev2020schemes}. The objective of this paper is to characterize the minimal communication resources subject to a strict constraint on the control performance measured by a quadratic cost.

The communication (information) resources are measured with the conditional directed information. The directed information is suitable for scenarios where the operations of the involved units are sequential, e.g., channels with feedback in communication \cite{PermuterWeissmanGoldsmith09,TatikondaMitter_IT09,Kramer98} and the causal rate distortion function in the context of control problems \cite{Milan_UB_causalRDF_concom,charalambos_nonanticipative}. Also here, both mappings of the encoder and the controller are sequential and the directed information serves as a lower bound to the operational variable-length (prefix) coding problem  \cite{tanaka_scheme,Milan_UB_causalRDF_concom} (See also Section \ref{sec:conclusions}). The control performance is measured by a quadratic cost function of the state and control signals. The optimization problem is formulated for two scenarios corresponding to the finite-horizon and infinite-horizon regimes.

For the finite-horizon problem, time-varying linear dynamical systems are investigated and the minimal conditional directed information is formulated as a convex optimization problem. The optimization problem has a semidefinite programming (SDP) form (more precisely, {\tt max log-det} form) and can be implemented using standard solvers even for {large} horizons. We also show that the solution to the optimization problem can be realized by three design steps: controller gains computation, solution for the convex optimization problem and a standard Kalman filter. For the infinite-horizon problem where the dynamical system matrices are time-invariant, we show that the optimization problem can be also formulated as an SDP with the optimization variables being two positive semidefinite matrices of finite dimensions. Most importantly, we show that the optimal encoding policy is a simple, time-invariant Gaussian measurement of the state that can be computed from the convex optimization.

Our results generalize the work by Tanaka et al. \cite{Tanaka_SDP}, which introduced the SDP approach for solving control-communication problems \cite{Tanaka_Gaussian_AC}. Specifically, we investigate the full LQG setting, while \cite{Tanaka_SDP} assumed that the LQG measurement is absent ($\vy_t=0$ in Fig. \ref{fig:setting}). Thus, the control performance in our setting relies on the fusion of both the communication link information and the LQG Gaussian measurement.

Two key changes in the SDP formulation are the objective function that includes a new term due to the study of conditional directed information rather than the directed information in \cite{Tanaka_SDP}, and a new linear matrix inequality (LMI) constraint which represents the error covariance reduction due to the LQG measurement. To find the optimal policy structure, we study a relaxed optimization problem where the LQG measurements are available to the encoder as well. We then show that even in this relaxed scenario, the optimal encoder signaling is a memoryless Gaussian measurement of the state. Thus, the knowledge of the LQG measurements at the encoder can not reduce the minimal communication resources. This extends the observation made in \cite{Kostina_SI_Allerton} in the scalar setting for the vector one.

The problem of control under communication constraints with side information has recently attracted much interest \cite{Kostina_SI_Allerton,Tanaka_SI,STAVROU_SI,lev_khina_LB,lev2020schemes}. In \cite{Kostina_SI_Allerton}, a scalar version of the problem in Fig. \ref{fig:setting} was solved. In \cite{Tanaka_SI}, a slightly less general problem than Fig.~\ref{fig:setting} was considered. They conjectured that a linear, memoryless policy is optimal and provided a semidefinite programming solution. The conjecture and the SDP formulation are subsumed in the conference version of the current paper \cite{sabag_cdc}, published prior to \cite{Tanaka_SI}. Additionally, \cite{Tanaka_SI} shows that the conditional directed information is within a constant gap from the operational problem of variable-length coding with side information available to the controller and the encoder. This is obtained by constructing a practical coding scheme and analyzing its performance. In \cite{STAVROU_SI}, the rate-distortion counterpart of the control problem studied here is considered. It is shown that if the optimal policy is assumed to be linear and the LQG cost admits an upper bound at all times, a simple optimization problem can be realized for the corresponding rate-distortion problem. The result presented below in Theorem~\ref{th:structure} confirms the optimality of the policy conjectured in \cite{Tanaka_SI} and of the linear policy assumed in \cite{STAVROU_SI}. It should be remarked that the objective considered in \cite{Tanaka_SI,STAVROU_SI} and that in the current paper is the conditional directed information, which is a lower bound to the operational problem in the case of a fixed rate or in the case of a variable rate and prefix-free codebooks. In \cite{lev_khina_LB}, it is shown that the directed information is a tighter lower bound but, it is also illustrated that a Gaussian policy does not attain its minimum and therefore, it is not clear whether a computable form of the directed information can be obtained. Finally, \cite{lev2020schemes} studied coding schemes for the scalar LQG setting with a Gaussian communication channel based on the joint source channel schemes in \cite{Tuncel_SI,kochman_modulo_SI}.

The remainder of this paper is organized as follows. Section \ref{sec:setting} introduces the notation, setting and the problem definition. Section \ref{sec:main} presents our main results and Section \ref{sec:proofs} provides their proofs. Section \ref{sec:numerical} presents numerical examples.

\section{The setting and problem definition} \label{sec:setting}
A linear dynamical system is described by
\begin{align}\label{eq:setting}
  \vx_{t+1} = A_t\vx_t+B_t\vu_t + \v{w}_t \ \ \ t\ge1,
\end{align}
where $\v{w}_t\sim \mathcal N(0,W_t)$ are mutually independent. The initial state $\vx_1$ is distributed according to $P_{1|0}$ and is independent of $\v{w}_t$. A noisy measurement of the state is available to the controller,
\begin{align}\label{eq:setting_SI}
    \vy_{t} = C_t\vx_t+ \vv_t,
\end{align}
with $\vv_t\sim \mathcal N(0,V_t)$. For a fixed time-horizon $T$, the LQG quadratic cost is defined as
\begin{align}\label{eq:def_lqgcost}
        J(\vx^{T+1},\vu^T) = \left[\sum_{t=1}^T \vx_{t+1}^{\ast} Q_t \vx_{t+1} + \vu_t^{\ast} R_t\vu_t\right],
\end{align}
with $Q_t\succeq0$ and $R_t\succ0$, and superscripts denote vectors starting at time $t=1$, e.g., $\vx^{T+1}\triangleq (\vx_1,\dots,\vx_{T+1})$.

The objective is to design a system such that the LQG cost does not exceed a cost target denoted by $\Gamma$. Naturally, if the measurements $\vy_t$ are sufficient to attain $\Gamma$, the classical solution to the LQG problem is satisfactory, and there is no need to expand. In the other extreme, the LQG cost cannot be reduced below the LQG cost attained by a fully observer, i.e., $\vy_t=\vx_t$. Our interest lies in the scenario where $\Gamma$ is below the optimal LQG cost attainable with the partial observer \eqref{eq:setting_SI} but above the optimal LQG cost attainable with the full observer. In this case, the introduction of a communication/information link (see the dashed line in Fig.~\ref{fig:setting}) between a full observer (encoder) and a controller (a decoder) will help to attain the desired LQG cost $\Gamma$.

The encoder is characterized by the set of stochastic mappings that can be compactly represented by the causal conditioning
\begin{align}\label{eq:enc}
    P(\vf^T||\vx^T)&\triangleq \prod_{t=1}^T P(\vf_t|\vf^{t-1},\vx^t).
\end{align}
Similarly, the decoder (controller) is a causally conditioned probability distribution
\begin{align}\label{eq:dec}
    P(\vu^T||\vf^T,\vy^T)&\triangleq \prod_{t=1}^T P(\vu_t|\vu^{t-1},\vf^t,\vy^t).
\end{align}

By the construction, the encoder-decoder pair satisfies at all times
\begin{align}\label{eq:policy}
    &P(\vu_t,\v{f}_t|\v{f}^{t-1},\vu^{t-1},\vx^t,\vy^t) \nn\\
    &\ \ = P(\vu_t|\vu^{t-1},\v{f}^{t},\vy^t)P(\v{f}_t| \vx^t,\v{f}^{t-1}).
\end{align}
The overall joint distribution can be summarized using the one-step update
\begin{align}
    &P(\vx_t,\vy_t,\vu_t,\v{f}_t|\vx^{t-1},\vy^{t-1},\v{f}^{t-1},\vu^{t-1})\nn\\
    &\ = P(\vy_t,\vx_t|\vx_{t-1},\v{u}_{t-1})P(\vu_t,\v{f}_t|\v{f}^{t-1},\vu^{t-1},\vx^t,\vy^t),
\end{align}

The communication resources are measured by the directed information from the encoder to the controller causally conditioned on the partial observations at the controller \cite{Massey90,Kramer98}:
    \begin{align}\label{eq:def_DI}
     I(\vx^T\to\vf^T||\vy^T) = \sum_{t=1}^T I(\vx^t;\vf_t|\vf^{t-1},\vy^{t}),
    \end{align}
where $I(X;Y|Z)$ is the mutual information between $X$ and $Y$ conditioned on $Z$.

The objective of this paper is to solve the optimization problem:
\begin{align}\label{eq:Optimization_def}
   &\min  I(\vx^T\to\v{f}^T||\vy^T)\nn\\
   &\ \ \text{s.t.} \ J(\vx^{T+1},\vu^T)\le \Gamma,
\end{align}
where the minimum is over policies of the form \eqref{eq:policy}.

When the measurement $\vy_t$ is absent, the optimization problem in \eqref{eq:Optimization_def} simplifies to the directed information ~$I(\vx^T\to\vf^T)$ that was investigated in \cite{silva_framework,Tanaka_SDP}. To see that the conditional directed information measures the information encapsulated at the encoding policy, assume that the $t$-th element in the conditional directed information satisfies:
\begin{align}
  I(\vx^t;\vf_t|\vf^{t-1},\vy^{t}) = I(\vx_t;\vf_t|\vf^{t-1},\vy^{t}).
\end{align}
Then, the right hand side extracts the state uncertainty at the controller with and without the encoding variable $\vf_t$, i.e., $I(\vx_t;\vf_t|\vf^{t-1},\vy^{t}) = h(\vx_t|\vf^{t-1},\vy^{t}) - h(\vx_t|\vf^{t},\vy^{t})$. Specifically, the difference reflects the fact that $\vf_t$ is costly while $\vy_t$ is a natural occurrence of the dynamical system without any cost. These arguments are formalized in Theorem \ref{th:structure} and Lemma \ref{lemma:covariances}. We will also show a relation between the optimal conditional directed information and the Kalman filtering theory with two independent measurements.

\section{Results}\label{sec:main}
This section presents our results. First, we provide a simple structure for the optimal policy in Theorem \ref{th:structure}. Then, we present preliminaries on Kalman filtering theory to express the directed information in its terms. We then provide a semidefinite programming formulation of the optimization problem and present the optimal system design. Finally, Section \ref{subsec:infinite} includes the formulation and the solution for the infinite-horizon problem.

\subsection{Optimal policy structure}
The first result is the optimal structure of the observer (encoder) and controller (decoder) policies:
\begin{theorem}[Optimal policy structure]\label{th:structure}
An optimal policy for the optimization problem in \eqref{eq:Optimization_def} is given by
\begin{align}\label{eq:th_structure_1}
    \vf_t &= D_t\vx_t + \v{m}_t, \nn\\
    \vu_t &= -K_t\E[\vx_t|\vf^{t},\vy^{t}],
\end{align}
where $\v{m}_t\sim\mathcal{N}(0,M_t)$ is independent from $(\vx^t,\vy^t,\vf^{t-1})$ and $K_t$ is a constant given by the LQR controller (see \eqref{eq:control_gain}, below).

Moreover, the knowledge of the measurements $\vy^t$ at the encoder does not reduce the optimal directed information control problem in \eqref{eq:Optimization_def}.
\end{theorem}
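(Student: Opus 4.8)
The plan is to prove the ``moreover'' statement as a relaxation (sandwich) argument. Introduce the \emph{relaxed} problem in which the measurements are also available to the encoder, so that the encoder is the causally conditioned kernel $P(\vf^T||\vx^T,\vy^T)=\prod_{t=1}^T P(\vf_t|\vf^{t-1},\vx^t,\vy^t)$, while the objective and the controller \eqref{eq:dec} are unchanged. Let $R^{\star}$ and $R^{\star}_{\mathrm{rel}}$ denote the optimal values of \eqref{eq:Optimization_def} and of its relaxation. Since every policy of the form \eqref{eq:policy} is admissible in the relaxed problem, we have trivially $R^{\star}_{\mathrm{rel}}\le R^{\star}$; the whole content of the claim is the reverse inequality $R^{\star}_{\mathrm{rel}}\ge R^{\star}$, i.e.\ that giving $\vy^t$ to the encoder cannot lower the minimal conditional directed information.

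The first step is to derive a lower bound on the conditional directed information that is valid for \emph{any} relaxed policy and depends only on the estimation-error covariances. Starting from the decomposition \eqref{eq:def_DI}, I would drop the past states, $I(\vx^t;\vf_t|\vf^{t-1},\vy^{t})\ge I(\vx_t;\vf_t|\vf^{t-1},\vy^{t})=h(\vx_t|\vf^{t-1},\vy^{t})-h(\vx_t|\vf^t,\vy^{t})$, upper bound the posterior term by the Gaussian maximum-entropy inequality with $P_{t|t}\triangleq\E[\mathrm{Cov}(\vx_t|\vf^t,\vy^t)]$, and lower bound the predictive term $h(\vx_t|\vf^{t-1},\vy^{t})$ by the entropy of a Gaussian with covariance $\bar P_t\triangleq\E[\mathrm{Cov}(\vx_t|\vf^{t-1},\vy^t)]$. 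This yields
\[
 I(\vx^T\to\vf^T||\vy^T)\ \ge\ \sum_{t=1}^T\tfrac12\log\frac{\det \bar P_t}{\det P_{t|t}},
\]
which is exactly the reduction-in-covariance statement of Lemma~\ref{lemma:covariances}. The decisive observation is that $\bar P_t$ is pinned down by $P_{t-1|t-1}$ through a fixed Kalman prediction-plus-measurement recursion: because $\v w_{t-1}$ and $\vv_t$ are independent of $(\vx^{t-1},\vy^{t-1},\vf^{t-1})$, the prediction step $A_{t-1}P_{t-1|t-1}A_{t-1}^{\ast}+W_{t-1}$ and the $\vy_t$-update hold for \emph{every} policy, whether or not the encoder observes $\vy$. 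Consequently the set of admissible posterior-covariance sequences $\{P_{t|t}\}$, constrained by $0\preceq P_{t|t}\preceq \bar P_t$ and by the LQG cost $J\le\Gamma$ (which is itself a function of $\{P_{t|t}\}$), is \emph{identical} in the original and relaxed problems.

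To close the sandwich I would invoke achievability of the memoryless Gaussian policy \eqref{eq:th_structure_1}, which \emph{ignores} $\vy$ at the encoder. For that policy every inequality above is an equality: $\vx^{t-1}$ drops out because $\vf_t=D_t\vx_t+\v m_t$ depends on the past only through $\vx_t$, and the two maximum-entropy bounds are tight by joint Gaussianity; moreover any admissible $\{P_{t|t}\}$ in the common region is realized by a suitable choice of $(D_t,M_t)$. Hence $R^{\star}$ equals the minimum of $\sum_{t}\tfrac12\log(\det\bar P_t/\det P_{t|t})$ over the common covariance region, and this same quantity lower bounds $R^{\star}_{\mathrm{rel}}$ by the previous paragraph; therefore $R^{\star}_{\mathrm{rel}}\ge R^{\star}$, and with the trivial reverse inequality we conclude $R^{\star}_{\mathrm{rel}}=R^{\star}$, proving that knowledge of $\vy^t$ at the encoder is useless.

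The main obstacle is the lower bound on the predictive differential entropy $h(\vx_t|\vf^{t-1},\vy^{t})$ for general (non-Gaussian) relaxed policies: unlike the posterior term, this direction is not delivered by the Gaussian maximum-entropy inequality, and it is precisely here that one must show the covariance recursion is policy-independent and that the predictive entropy is minorized by its Gaussian surrogate. This is the technical core abstracted into Lemma~\ref{lemma:covariances}; once it is in hand, the relaxation argument and the Gaussian achievability make the final statement follow by the covariance-region matching described above.
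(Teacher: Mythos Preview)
Your sandwich architecture is right, but the lower bound you propose does not go through. You want
\[
I(\vx_t;\vf_t|\vf^{t-1},\vy^t)=h(\vx_t|\vf^{t-1},\vy^t)-h(\vx_t|\vf^t,\vy^t)\ \ge\ \tfrac12\log\det(2\pi e\,\bar P_t)-\tfrac12\log\det(2\pi e\,P_{t|t}),
\]
which requires $h(\vx_t|\vf^{t-1},\vy^t)\ge\tfrac12\log\det(2\pi e\,\bar P_t)$. As you yourself note, this is the \emph{wrong} direction for the maximum-entropy inequality, and Lemma~\ref{lemma:covariances} does not rescue it: that lemma only records the Kalman recursions once the policy is already linear-Gaussian; it asserts nothing about entropies under general encoders. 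Your second claim, that $\bar P_t$ is ``pinned down'' by $P_{t-1|t-1}$ via a fixed Kalman update, is also false for arbitrary policies. The prediction step $A_{t-1}P_{t-1|t-1}A_{t-1}^\ast+W_{t-1}$ is indeed policy-independent, but the subsequent $\vy_t$-update of the conditional covariance coincides with the Kalman formula only when the prior $\vx_t\mid\vf^{t-1},\vy^{t-1}$ is Gaussian, which it need not be when $\vf$ is non-Gaussian. So neither the entropy inequality nor the covariance recursion you need is available, and the ``covariance-region matching'' argument collapses.

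The paper sidesteps this obstruction by not bounding the two entropies separately. It first de-randomizes the controller, then rewrites $h(\vx_t|\vy^t,\vf^{t-1})$ via the chain rule to expose $h(\vx_{t-1}|\vy^{t-1},\vf^{t-1})$, and telescopes the sum over $t$. After telescoping, every policy-dependent entropy appears with a \emph{negative} sign (the terms $-h(\vx_T|\vy^T,\vf^T)$, $-h(\vy_t|\vy^{t-1},\vf^{t-1})$, and $-h(\vx_{t-1}|\vx_t,\vy^{t-1},\vf^{t-1})$), while the positive terms $h(\vx_1)$ and $h(\vy_t,\vx_t|\vx_{t-1},\vu_{t-1})$ are fixed by the system model. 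Now the Gaussian maximum-entropy inequality applies in the correct direction to all free terms simultaneously, yielding a lower bound that depends only on second-order statistics and is achieved with equality by the memoryless Gaussian encoder \eqref{eq:th_structure_1}. This telescoping trick is the missing idea; without it, the predictive-entropy lower bound you rely on cannot be justified.
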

The theorem simplifies significantly the maximization domain from the general policy in \eqref{eq:policy} to the set $\{(D_t,M_t)\}_{t=1}^T$. The encoding rule reveals that $\vf_t$ reduces the communication resources by introducing an additive noise to the state observation. We emphasize that our problem formulation does not impose any structural constraints onto the encoding policy such as linear, memoryless, or following a Gaussian distribution. The control signal $\vu_t$ is the standard LQG certainty equivalence controller. Thus, similar to the scalar case in \cite{Kostina_SI_Allerton}, the separation between the control gain and the estimation is preserved in our setting. The proof of Theorem~\ref{th:structure} appears in Section~\ref{sec:proofs}.

Theorem \ref{th:structure} extends \cite[Th. $1$]{Tanaka_SDP} and recovers it when $\vy_t$, the observation, is absent. The extension of \cite{Tanaka_SDP} to our setting is not trivial (see e.g., \cite{Tanaka_SI,STAVROU_SI} for progress on that problem), and involves the study of a relaxed optimization problem where, at time $t$, the vector $\vy^t$ is also available to the encoder. For this relaxed optimization problem, we show that the optimal policy is of the form \eqref{eq:th_structure_1}. In other words, even if the side information is available at the encoder, it cannot reduce the conditional directed information. This is consistent with the observation made in \cite{Kostina_SI_Allerton} in the context of the scalar system.

\subsection{Kalman filter with two (independent) measurements}
As is evident from the optimal structure in Theorem \ref{th:structure}, the encoding function $\vf_t$ is a noisy measurement of the system state, and its additive noise is independent of the other measurement $\vy_t$. Thus, the optimal system has a structure of an LQG setting with two independent observations. However, for the purpose of optimizing the communication resources, $\vf_t$ has a cost, while $\vy_t$ is a natural occurrence of the system. In this section, we provide short preliminaries on Kalman filtering and present the conditional directed information in Kalman filtering terms.

Following a standard convention, we denote the error covariance matrices with respect to both measurements  $\vy_t$ and $\vf_t$ as
\begin{align}\label{eq:def_error_1}
    P_{t|t-1} &\triangleq \text{Cov}(\vx_t - \E[\vx_t|\vf^{t-1},\vy^{t-1},\vu^{t-1}])\nn\\
        P_{t|t} &\triangleq \text{Cov}(\vx_t - \E[\vx_t|\vf^t,\vy^t,\vu^{t-1}]).
\end{align}
Since the communication resources should be measured with respect to the observation $\vf_t$ only, we define the intermediate error covariance matrix corresponding to the prediction error after observing $\vy_t$ only:
\begin{align}\label{eq:def_error_2}
        P^+_{t|t-1} &\triangleq \text{Cov}(\vx_t - \E[\vx_t|\vf^{t-1},\vy^{t},\vu^{t-1}]).
\end{align}

\begin{figure*}[b]
\rule{17.6cm}{0.4pt}
  \begin{align}\label{eq:omega_constraint}
         \Omega_1&\triangleq \begin{bmatrix} P_{1|0} - P_{1|1}& P_{1|0}C_1^{\tr}\\C_1P_{1|0}& C_1P_{1|0}C_1^{\tr} +V_1
\end{bmatrix} \nn\\
\Omega_t&\triangleq\begin{bmatrix} (A_{t-1}P_{t-1|t-1}A_{t-1}^{\tr} +W_{t-1}) - P_{t|t}& (A_{t-1}P_{t-1|t-1}A_{t-1}^{\tr} +W_{t-1})C_t^{\tr}\\C_t(A_{t-1}P_{t-1|t-1}A_{t-1}^{\tr} +W_{t-1})& C_t(A_{t-1}P_{t-1|t-1}A_{t-1}^{\tr} +W_{t-1})C_t^{\tr} +V_t
\end{bmatrix} &\text{for} \ \ \ t=2, \dots, T.
  \end{align}
\end{figure*}
The following lemma formalizes several relations between the error covariances.
\begin{lemma}[Error covariance matrices]\label{lemma:covariances}
Let $P_{1|0}$ be the covariance matrix of $X_1$. Then, for a fixed policy $\{(D_t,M_t)\}_t$, the error covariance matrices can be updated as
\begin{subequations}
\begin{align}
    (P^+_{t|t-1})^{-1} &= P_{t|t-1}^{-1}+\snr^Y_t\\
    P_{t|t} &= ((P^+_{t|t-1})^{-1} + \snr^F_t)^{-1}\nn\\
            &= (I-L^F_tD_t)P^+_{t|t-1} \label{eq:cov_update_b}\\
    P_{t+1|t}   &=  A_{t}P_{t|t}A_{t}^{\tr} + W_{t},
\end{align}
\end{subequations}
where $L^F_t = P^+_{t|t-1}D_t^{\tr}(D_tP^+_{t|t-1}D_t^{\tr} + M_t)^{-1}$,  $\snr^F_t = D_t^{\tr}M_t^{-1}D_t$ and $\snr^Y_t= C_t^{\tr}V_t^{-1}C_t$. 
\end{lemma}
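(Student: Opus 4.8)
The plan is to establish the lemma as a sequence of standard Kalman-filter operations — a $\vy_t$-measurement update, an $\vf_t$-measurement update, and a time prediction — after first securing the joint Gaussianity that makes these error covariances deterministic. First I would argue by induction on $t$ that, under a policy of the form \eqref{eq:th_structure_1}, the tuple $(\vx^t,\vy^t,\vf^t,\vu^{t-1})$ is jointly Gaussian: the initial state $\vx_1$ is Gaussian, the noises $\v{w}_t,\vv_t,\v{m}_t$ are Gaussian and mutually independent, the maps producing $\vf_t$ and $\vy_t$ are linear, and $\vu_t=-K_t\E[\vx_t|\vf^t,\vy^t]$ is a deterministic linear function of $(\vf^t,\vy^t)$. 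Consequently the conditional expectations are the linear MMSE estimates, and the associated error covariances do not depend on the realizations of the conditioning variables, so that each of $P_{t|t-1}$, $P^+_{t|t-1}$, $P_{t|t}$ is a well-defined deterministic matrix with base case $P_{1|0}=\text{Cov}(\vx_1)$. In particular, since each $\vu_s$ is measurable with respect to $(\vf^s,\vy^s)$, conditioning additionally on $\vu^{t-1}$ is redundant and the controls enter only through a deterministic shift of the conditional mean, contributing nothing to the covariances.

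Next I would carry out the two measurement updates in their information (inverse-covariance) form. The key structural fact is that $\vv_t$ and $\v{m}_t$ are independent and $\v{m}_t$ is independent of $(\vx^t,\vy^t,\vf^{t-1})$, so that given $\vx_t$ and the past the observations $\vy_t$ and $\vf_t$ are conditionally independent; this legitimizes incorporating them one at a time. For a jointly Gaussian prior $\vx_t$ with error covariance $P$ and a linear measurement $\vz=H\vx_t+\v{n}$ with $\v{n}\sim\mathcal N(0,N)$ independent of $\vx_t$ given the past, completing the square in the exponent of the joint Gaussian density gives the posterior information $P^{-1}_{\mathrm{post}}=P^{-1}+H^{\tr}N^{-1}H$. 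Applying this with $(\vz,H,N)=(\vy_t,C_t,V_t)$ to the prior $P_{t|t-1}$ yields $(P^+_{t|t-1})^{-1}=P_{t|t-1}^{-1}+C_t^{\tr}V_t^{-1}C_t=P_{t|t-1}^{-1}+\snr^Y_t$, proving the first identity; applying it again with $(\vz,H,N)=(\vf_t,D_t,M_t)$ to the prior $P^+_{t|t-1}$ yields $P_{t|t}^{-1}=(P^+_{t|t-1})^{-1}+D_t^{\tr}M_t^{-1}D_t=(P^+_{t|t-1})^{-1}+\snr^F_t$, which is the first equality in \eqref{eq:cov_update_b}.

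To obtain the second, gain-based form in \eqref{eq:cov_update_b}, I would invoke the matrix-inversion (Woodbury) identity on the information update for $\vf_t$,
\begin{align}
  \big((P^+_{t|t-1})^{-1}+D_t^{\tr}M_t^{-1}D_t\big)^{-1}
  = P^+_{t|t-1}-P^+_{t|t-1}D_t^{\tr}\big(D_tP^+_{t|t-1}D_t^{\tr}+M_t\big)^{-1}D_tP^+_{t|t-1},
\end{align}
and recognize the bracketed factor as the gain $L^F_t=P^+_{t|t-1}D_t^{\tr}(D_tP^+_{t|t-1}D_t^{\tr}+M_t)^{-1}$, so that the right-hand side equals $(I-L^F_tD_t)P^+_{t|t-1}$. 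Finally, for the prediction step I would use the dynamics \eqref{eq:setting}: since $\vu_t$ is a known (measurable) function of $(\vf^t,\vy^t)$ and $\v{w}_t$ is independent of $\vx_t$ and of the entire past, the term $B_t\vu_t$ only translates the conditional mean, giving $P_{t+1|t}=\text{Cov}\big(A_t(\vx_t-\E[\vx_t|\vf^t,\vy^t,\vu^{t-1}])+\v{w}_t\big)=A_tP_{t|t}A_t^{\tr}+W_t$. The only genuinely delicate point is the first step — establishing joint Gaussianity and, with it, that the error covariances are realization-independent and that the controls may be treated as known constants; once that is in place, the three updates are the textbook Kalman recursions for two conditionally independent measurements.
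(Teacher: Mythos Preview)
Your proposal is correct and complete: it spells out the joint-Gaussianity induction, the two sequential information-form measurement updates, the Woodbury step to obtain the gain form, and the prediction step. The paper itself does not prove this lemma at all; it simply states that ``the identities are standard in Kalman filtering theory, and their proofs are omitted,'' so your argument is exactly the textbook derivation the paper defers to, only written out in full.
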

The identities are standard in Kalman filtering theory, and their proofs are omitted. It now follows that the directed information can be expressed as
\begin{align}\label{eq:th_kalman}
    I(\vx^T\to\v{f}^T||\vy^T) &= h(\vx_t|\vy^t,\vf^{t-1}) - h(\vx_t|\vy^t,\vf^{t})\nn\\
    &= \frac1{2}\sum_{t=1}^T \log \det (I-L^F_tD_t).
\end{align}
Note that the matrix $(I-L^F_tD_t)$ is the multiplicative term of the error reduction when computing $P_{t|t}$ from $P^+_{t|t-1}$. Therefore, the conditional directed information measures the reduction in error covariance with respect to $\vf_t$ only, as desired.
\subsection{SDP formulation}
Despite the elegant representation of the objective function in \eqref{eq:th_kalman}, it is not clear whether \eqref{eq:Optimization_def} can be formulated as a convex optimization since its inverse includes a product of two optimization variables ${(I-L^F_tD_t)^{-1} = I + P^+_{t|t-1}\snr^F_t}$. Our next result shows a convex optimization formulation for \eqref{eq:Optimization_def}.


\begin{theorem}[SDP formulation]\label{th:SDP}
For a fixed $P_{1|0}\succeq0$, the optimization problem~\eqref{eq:Optimization_def} can formulated as the convex optimization
\begin{align}\label{eq:th_main}
        &\inf_{\{P_{t|t},\Pi_t\}_{t=1}^T}~ \Lambda - \frac{1}{2}\sum_{t=1}^{T-1} \log \det(I+(A_tP_{t|t}A_t^{\tr} + W_t)\snr^Y_t) \nn\\
        & \ \ - \frac{1}{2}\sum_{t=1}^{T} \log \det \Pi_t \nn\\
        &\text{s.t. } \quad \Trace(\Phi_1 P_{1|0}) + \sum_{t = 1}^T \Trace\left(\Theta_t P_{t|t}\right) + \Trace(S_tW_t) \leq \Gamma, \nn\\
        &\quad\begin{bmatrix} P_{t|t} - \Pi_t& P_{t|t}A_t^{\tr}\\A_tP_{t|t}& A_tP_{t|t}A_t^{\tr} +W_t
        &\end{bmatrix}\succeq0,\Pi_t\succ0, \ \ t< T\nn\\
        &\quad P_{T|T} = \Pi_T \succeq0,\nn\\
        &\quad\Omega_t\succeq0,\  t = 1,\dots, T \ \text{(Eq. \eqref{eq:omega_constraint} below)},
\end{align}
where the constant matrices $\Phi \triangleq A^{\tr}_{t} S_{t}A_{t} - K_{t}^{\tr}(B^{\tr}_{t}S_{t}B_{t} + R_{t}) K_{t} $ and $\{\Theta_t\}_{t=1}^T$ can be computed from  \eqref{eq:control_gain} below, and the constant $\Lambda$ is given by
\begin{align}
    \Lambda &= - \frac1{2}\log \det( P_{1|0}^{-1}+\snr_1^Y) + \frac1{2}\sum_{t=1}^{T-1} \log \det W_t.
\end{align}
\end{theorem}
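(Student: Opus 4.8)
The plan is to replace the abstract policy variables by the Kalman error covariances and to recognize the resulting program as a \texttt{max-log-det} problem. By Theorem~\ref{th:structure} it suffices to optimize over linear--Gaussian encoders $\vf_t=D_t\vx_t+\v{m}_t$ together with the certainty-equivalence controller, so the closed loop is a Kalman filter fed by the two independent measurements $\vy_t$ and $\vf_t$, and every relevant quantity is a deterministic function of $\{P_{t|t}\}$. First I would establish that a sequence $\{P_{t|t}\}$ is realized by some admissible $\{(D_t,M_t)\}$ if and only if $P_{t|t}\preceq P^+_{t|t-1}$ at every step: the ``only if'' direction is immediate from Lemma~\ref{lemma:covariances}, while for ``if'' one solves $D_t^{\tr}M_t^{-1}D_t=P_{t|t}^{-1}-(P^+_{t|t-1})^{-1}$, which is feasible exactly because the right-hand side is positive semidefinite. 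Taking the Schur complement of the positive-definite lower-right block of $\Omega_t$ in \eqref{eq:omega_constraint}, and identifying the resulting expression as the Kalman update $P^+_{t|t-1}$ with respect to $\vy_t$, shows that $\Omega_t\succeq0$ is precisely this realizability condition, so the feasible set is $\{\,P_{t|t}:\Omega_t\succeq0\,\}$.

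Next I would convert the two remaining ingredients. For the constraint, the controller is the LQG certainty-equivalence law with gains $K_t$ and cost matrices $S_t,\Theta_t$ generated by the Riccati recursion \eqref{eq:control_gain}; the usual separation argument decomposes $\E[J]$ into an observation-independent part and a penalty on the estimation error, producing the affine bound $\Trace(\Phi_1P_{1|0})+\sum_t\big(\Trace(\Theta_tP_{t|t})+\Trace(S_tW_t)\big)\le\Gamma$, which is standard LQG algebra that I would only cite. For the objective, I would start from the Gaussian-entropy form $I(\vx^T\to\vf^T\|\vy^T)=\frac12\sum_t\big(\log\det P^+_{t|t-1}-\log\det P_{t|t}\big)$ and eliminate the intermediate covariance by the Lemma~\ref{lemma:covariances} identity $\log\det P^+_{t|t-1}=\log\det P_{t|t-1}-\log\det(I+P_{t|t-1}\snr^Y_t)$, then substitute $P_{t|t-1}=A_{t-1}P_{t-1|t-1}A_{t-1}^{\tr}+W_{t-1}$. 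Reindexing $t\mapsto t+1$ telescopes the prediction determinants, absorbs the boundary contributions at $t=1,T$ (depending only on the fixed $P_{1|0}$ and on the $W_t$) into the constant $\Lambda$, and leaves the measurement sum $-\frac12\sum_{t<T}\log\det\big(I+(A_tP_{t|t}A_t^{\tr}+W_t)\snr^Y_{t+1}\big)$ --- the prediction covariance $A_tP_{t|t}A_t^{\tr}+W_t=P_{t+1|t}$ pairing with the next measurement $\snr^Y_{t+1}$ of \eqref{eq:th_main} --- together with a residual $\frac12\sum_{t<T}\big(\log\det(A_tP_{t|t}A_t^{\tr}+W_t)-\log\det P_{t|t}\big)-\frac12\log\det P_{T|T}$.

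The main obstacle, and the heart of the theorem, is that this residual is \emph{not} convex: it is exactly the obstruction noted before the statement, since $\log\det(A_tP_{t|t}A_t^{\tr}+W_t)$ is concave in $P_{t|t}$ yet is being minimized. I would remove it with the determinant identity $\log\det(A_tP_{t|t}A_t^{\tr}+W_t)-\log\det P_{t|t}=\log\det W_t+\log\det(P_{t|t}^{-1}+A_t^{\tr}W_t^{-1}A_t)$ and then \emph{relax} the offending concave piece by a slack matrix, writing $\frac12\log\det(P_{t|t}^{-1}+A_t^{\tr}W_t^{-1}A_t)=-\frac12\log\det\Pi_t$ subject to $\Pi_t\preceq(P_{t|t}^{-1}+A_t^{\tr}W_t^{-1}A_t)^{-1}$. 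By the matrix-inversion lemma the latter inequality is the Schur complement of the block LMI in \eqref{eq:th_main}, hence jointly affine in $(P_{t|t},\Pi_t)$; and since $-\frac12\log\det\Pi_t$ is decreasing in $\Pi_t$, any minimizer saturates the inequality, so the relaxation is \emph{exact}. The boundary index $t=T$ is accommodated by the separate equality $\Pi_T=P_{T|T}$.

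It then remains to verify convexity, which is routine: $-\log\det\Pi_t$ is convex; $-\log\det\big(I+(A_tP_{t|t}A_t^{\tr}+W_t)\snr^Y_{t+1}\big)$ is convex because $G\mapsto\log\det\big(I+(\snr^Y_{t+1})^{1/2}G(\snr^Y_{t+1})^{1/2}\big)$ is concave and $P_{t|t}\mapsto A_tP_{t|t}A_t^{\tr}+W_t$ is affine; the cost bound is affine; and $\Omega_t\succeq0$, the block LMI, and $\Pi_t\succ0$ are convex. Hence \eqref{eq:th_main} is a convex \texttt{max-log-det} program equivalent to \eqref{eq:Optimization_def}, and a last bookkeeping check that $\Lambda$ collects the residual boundary terms closes the argument. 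I expect the two delicate points to be the exactness of the $\Pi_t$-relaxation and the careful telescoping/reindexing of the prediction determinants; the convexity verification and the cost decomposition are standard.
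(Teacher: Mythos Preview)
Your proposal is correct and follows essentially the same route as the paper: reduce to Kalman covariances via Theorem~\ref{th:structure}, rewrite the objective using the identity $\log\det(A_tP_{t|t}A_t^{\tr}+W_t)-\log\det P_{t|t}=\log\det W_t+\log\det(P_{t|t}^{-1}+A_t^{\tr}W_t^{-1}A_t)$, introduce the exact $\Pi_t$-relaxation via the matrix-inversion lemma and Schur complement, and cast realizability as the LMI $\Omega_t\succeq0$. Your parenthetical observation that the measurement term pairs $P_{t+1|t}=A_tP_{t|t}A_t^{\tr}+W_t$ with $\snr^Y_{t+1}$ is also consistent with the paper's own derivation.
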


The optimization problem in Theorem \ref{th:SDP} is convex optimization with respect to the decision variables $(P_{t|t},\Pi_t)$, and can be solved using standard solvers, e.g., \cite{boyd_detmax,cvx,yalmip}\footnote{Some solvers require to write the determinant of $I + (A_t P_{t|t} A^{\tr} + W_t) \snr^Y_t$ in a symmetric form using Sylvester's determinant theorem.}. It will be shown in the proof of Theorem \ref{th:SDP} in Section \ref{sec:proofs} below that the auxiliary decision variable $\Pi_t$ evaluated at the optimal point is equal to $(P_{t|t}^{-1} + A_t^{\tr}W_t^{-1}A_t)^{-1}$. However, it is necessary to introduce this variable in order to convert the objective to have a standard convex form. Then, the equality constraint resulting from the change of variable $\Pi_t=(P_{t|t}^{-1} + A_t^{\tr}W_t^{-1}A_t)^{-1}$ can be (optimally) relaxed to an inequality that is equivalent to the LMI above. The optimization problem extends \cite[Th. $1$]{Tanaka_SDP} to the case where the LQG measurement $\vy_t$ is available to the controller, and recovers it by choosing $C_t = \snr_t^Y=0$. In this case, the constraints on $\Omega_t$ simplify to $(A_{t-1}P_{t-1|t-1}A_{t-1}^{\tr} +W_{t-1}) - P_{t|t}\succeq 0$ and $P_{1|0}\succeq P_{1|1}$.

\subsection{System design}\label{subsec:system_design}
In this section, we construct a three-steps realizable policy using the results from the previous section..
\subsubsection{The controller gain}
The controller gains are independent of the measurements and the variables from the optimization problem. The gains can be computed from a backward Riccati recursion, with the initial condition $S_T = Q_T$, as
\begin{align}\label{eq:control_gain}
    S_{t-1} &= A^{\tr}_{t} S_{t}A_{t} - K_{t}^{\tr}(B^{\tr}_{t}S_{t}B_{t} + R_{t}) K_{t} + Q_{t-1} \nn\\
    K_t &= (B^{\tr}_tS_{t}B_t + R_t)^{-1}B^{\tr}_t S_{t}A_t\nn\\
    \Theta_t&= K_t^{\tr} (B^{\tr}_tS_{t}B_t + R_t) K_t.
\end{align}
\subsubsection{Covariance matrices} Given the sequence $\{\Theta_t\}_{t=1}^T$, the optimal $\{P_{t|t}\}_{t=1}^T$ can be determined from the convex optimization problem in Theorem \ref{th:SDP}, and one can compute
$$\snr_t^F = P^{-1}_{t|t} - (A_{t-1}P_{t-1|t-1}A^{\tr}_{t-1}+W_{t-1})^{-1} - \snr_t^Y.$$
An application of the SVD decomposition $\snr^F_t =D_t^{\tr}M^{-1}_tD_t$ determines the parameters $\{(D_t,M_t)\}_{t=1}^T$ of the optimal policy in Theorem \ref{th:structure}.

\subsubsection{Kalman filter}
The Kalman gain is defined as
\begin{align}
  L_t &= P_{t|t-1}H_t^{\tr}(H_tP_{t|t}H_t^{\tr} + N_t)^{-1},
\end{align}
where $H_t\triangleq\begin{bmatrix}
C_t \\
D_t
\end{bmatrix} , N_t\triangleq\begin{bmatrix}
V_t &0\\
0&M_t
\end{bmatrix}$.

The Kalman update is done in two steps:
\begin{align}\label{eq:Kalman_update}
    \hat{\vx}_{t+1|t} &= A_t\hat{\vx}_{t} + B_t\vu_t\nn\\
    \hat{\vx}_{t} &= \hat{\vx}_{t|t-1} + L_t\begin{bmatrix}\vy_t - C_t\hat{\vx}_{t|t-1}\\ \vf_t- D_t\hat{\vx}_{t|t-1}\end{bmatrix},
\end{align}
where the control signal is $\vu_t = -K_t\hat{\vx}_t$.
\subsection{The infinite-horizon setting}\label{subsec:infinite}
In this section, we formulate and solve the optimization problem  \eqref{eq:Optimization_def} in the infinite-horizon regime. In this scenario, we  consider time-invariant systems, i.e., $A_t= A$, $B_t= B$, $W_t= W$, $C_t= C$, $V_t= V$ and time-invariant cost matrices $Q_t = Q$, $R_t=R$. The optimization problem is defined as:
\begin{align}\label{eq:Optimization_def_inf}
   &\inf \limsup_{T\to\infty} \frac1{T}( \vx^T\to\v{f}^T||\vy^T)\nn\\
   &\ \ \text{s.t.} \ \limsup_{T\to\infty} \frac1{T} J(\vx^{T+1},\vu^T)\le \Gamma,
\end{align}
where the infimum is taken with respect to the sequence of stochastic policies given in \eqref{eq:policy}.

The solution structure is similar to the finite-horizon solution in Theorem \ref{th:SDP}. In particular, we construct a controller based on a solution to a convex optimization problem. We begin with the controller description.
\subsubsection{Controller gain}
Assume that $(A,B)$ is stabilizable and $(A,Q^{1/2})$ is observable on the unit circle. Then, we define $\bar{S}$ to be the unique stabilizing solution for the Riccati equation
\begin{align}\label{eq:S_bar_riccati}
&A^{\tr}SA-S-A^{\tr}SB(B^{\tr}SB+R)^{-1}B^{\tr}SA+Q=0.
\end{align}
By having the stabilizing solution, we can present the SDP-based system design in the infinite-horizon regime.
\begin{theorem}\label{th:SDP_infinite}
If the pair $(A,B)$ is stabilizable and the pair $(A,Q^{1/2})$ is observable on the unit circle, the infinite-horizon optimization problem~\eqref{eq:Optimization_def_inf} can be formulated as the convex optimization
\begin{align}\label{eq:infinite_optimization}
        &\min_{P,\Pi}~ \frac1{2} \log \det W - \frac{1}{2} \log \det(I+\snr^Y(APA^T+W)) \nn\\
        & \ - \frac{1}{2} \log \det \Pi \nn\\
        \text{s.t. } &\quad \Trace\left(\Theta P\right) + \Trace\left(W \bar{S}\right)\leq D, \nn\\
        &\quad\begin{bmatrix} P - \Pi& PA^{\tr}\\AP& APA^{\tr} +W
        &\end{bmatrix}\succeq0,\quad\Pi\succ0.\\
        &\quad \begin{bmatrix} APA^{\tr} + W - P& (APA^{\tr} +W)C^{\tr}\\C(APA^{\tr} +W)& C(APA^{\tr} +W)C^{\tr} +V
\end{bmatrix}\succeq 0,\nn
\end{align}
where $\bar{S}$ is given in \eqref{eq:S_bar_riccati}, and $\Theta= K^{\tr}(B^{\tr}\bar{S}B+R)K$.

Moreover, let $P$ be the optimal solution in \eqref{eq:infinite_optimization} and compute
\begin{align}\label{eq:policy_infinite}
  \snr^F &= P^{-1} - (APA^{\tr}+W)^{-1} - \snr^Y,
\end{align}
and its SVD decomposition as $\snr^F = D^{\tr}M^{-1}D$. Then, optimal time-invariant encoder and decoder are given by
\begin{align}
    \vf_t&= D\vx_t + \v{m}_t\nn\\
        \vu_t&= -K \hat{\vx}_t,
\end{align}
where $\v{m}_t\sim N(0,M)$, $K=(B^{\tr}\bar{S}B+R)^{-1}B^{\tr}\bar{S}A$, and $\hat{\vx}_t$ is computed recursively using the Kalman filter in \eqref{eq:Kalman_update}.
\end{theorem}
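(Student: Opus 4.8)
The plan is to obtain Theorem~\ref{th:SDP_infinite} as the Cesàro limit of the finite-horizon characterization in Theorem~\ref{th:SDP}: divide the finite-horizon SDP \eqref{eq:th_main} by $T$ and let $T\to\infty$. By Theorems~\ref{th:structure} and~\ref{th:SDP}, for every horizon the operational problem \eqref{eq:Optimization_def} coincides with the convex program \eqref{eq:th_main} and is attained by a linear, memoryless, Gaussian policy parametrized by the error covariances $\{P_{t|t}\}$; I would first argue that the optimal policy for \eqref{eq:Optimization_def_inf} may likewise be restricted to this class by a truncation/limiting argument. I would then record the control-gain asymptotics: under the stated hypotheses — $(A,B)$ stabilizable and $(A,Q^{1/2})$ observable on the unit circle — the backward recursion \eqref{eq:control_gain} converges, $S_t\to\bar S$, $K_t\to K$, $\Theta_t\to\Theta$, to the quantities built from the stabilizing solution $\bar S$ of \eqref{eq:S_bar_riccati}. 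Hence the per-stage cost coefficient stabilizes, the boundary term $\Trace(\Phi_1 P_{1|0})$ is $O(1)$, and $\Lambda/T\to\tfrac12\log\det W$, identifying the per-stage functionals of \eqref{eq:th_main} with those of \eqref{eq:infinite_optimization} in the limit. Writing $J_\infty^\ast$ for the value of \eqref{eq:infinite_optimization} and $R_\infty$ for that of \eqref{eq:Optimization_def_inf}, the proof splits into a converse ($R_\infty\ge J_\infty^\ast$) and an achievability part ($R_\infty\le J_\infty^\ast$).

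For the converse I would use Cesàro averaging. Take the finite-horizon optimizer $\{(P_{t|t},\Pi_t)\}_{t=1}^T$ of \eqref{eq:th_main} and form $\bar P=\tfrac1T\sum_t P_{t|t}$, $\bar\Pi=\tfrac1T\sum_t \Pi_t$. The normalized objective is an average of terms of the form $-\log\det(\text{affine in }P_{t|t})$ and $-\log\det\Pi_t$, each convex, so by Jensen's inequality it dominates the same expression evaluated at $(\bar P,\bar\Pi)$. Crucially, the LMIs in \eqref{eq:th_main} and the blocks $\Omega_t$ of \eqref{eq:omega_constraint} are \emph{affine} in the pair $(P_{t-1|t-1},P_{t|t})$, so averaging the constraints over $t$ reproduces exactly the infinite-horizon constraints of \eqref{eq:infinite_optimization} evaluated at $(\bar P,\bar\Pi)$, up to $O(1/T)$ boundary corrections from the mismatch between $\tfrac1T\sum_t P_{t-1|t-1}$ and $\tfrac1T\sum_t P_{t|t}$ (which differ only by $P_{1|1}$ and $P_{T|T}$). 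Thus $(\bar P,\bar\Pi)$ is asymptotically feasible for \eqref{eq:infinite_optimization} with objective no larger than $\tfrac1T J_T^\ast$, giving $J_\infty^\ast\le\liminf_T \tfrac1T J_T^\ast$. Pairing this with $R_\infty\ge\liminf_T\tfrac1T R_T$ — obtained by truncating a feasible infinite-horizon policy to horizon $T$ and invoking the finite-horizon equivalence, together with continuity of the rate--cost function to absorb the $\limsup$ in the cost constraint — yields $R_\infty\ge J_\infty^\ast$.

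For achievability I would instantiate the time-invariant policy $\vf_t=D\vx_t+\v{m}_t$, $\vu_t=-K\hat{\vx}_t$ built from the optimal $P$ of \eqref{eq:infinite_optimization} via the SVD $\snr^F=D^{\tr}M^{-1}D$ in \eqref{eq:policy_infinite}. The error covariance then evolves by the two-measurement filtering recursion of Lemma~\ref{lemma:covariances} with time-invariant data, and I would show it converges to $P$, the stabilizing fixed point of that recursion. Feeding the convergent covariances into \eqref{eq:th_kalman} and into the cost gives $\tfrac1T I(\vx^T\to\vf^T||\vy^T)\to J_\infty^\ast$ and $\limsup_T\tfrac1T J(\vx^{T+1},\vu^T)\le\Gamma$, so $R_\infty\le J_\infty^\ast$. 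Combining both directions gives $R_\infty=J_\infty^\ast$, and since the achieving policy is synthesized from the single matrix $P$, optimality is attained by a time-invariant policy, as claimed.

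The main obstacle I anticipate is concentrated in the two limiting arguments rather than in the algebra. In the converse, the delicate point is the $\limsup$ cost constraint: a feasible infinite-horizon policy need not satisfy $\tfrac1T J\le\Gamma$ at every $T$, so one must pass to horizons along which the realized cost approaches $\Gamma$ and control the induced perturbation of $J_T^\ast$ through monotonicity/continuity of the rate--cost trade-off. In achievability, the crux is proving global convergence of the combined filtering Riccati recursion to the designed steady state $P$ — that is, verifying the constraints of \eqref{eq:infinite_optimization} force $P$ to be the \emph{stabilizing} solution of the filtering equation — which requires transferring the stabilizability/observability hypotheses to the augmented measurement $H=[\,C^{\tr}\ D^{\tr}\,]^{\tr}$; the process-noise nondegeneracy $W\succ0$ and the structure of the $\Omega$ constraint are what I would use to secure this.
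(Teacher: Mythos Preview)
Your proposal is correct and follows essentially the same approach as the paper: Ces\`aro averaging combined with Jensen's inequality (using affinity of the LMIs and convexity of the per-stage objective) for the converse, and instantiation of a time-invariant policy with Riccati convergence (via $W\succ0$, which forces detectability of $(A,H)$) for achievability. The paper packages the two boundary/compactness issues you correctly flag as the main obstacles into explicit technical lemmas --- one showing the averages $\bar P_T$ admit a limit point in the stationary feasible set $\mathcal D_0$, and one showing the residual $-\tfrac{1}{2T}\log\det P_{T|T}$ is asymptotically nonnegative --- but the overall strategy and the identification of the delicate points coincide with yours.
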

Theorem \ref{th:SDP_infinite} shows that the optimization problem in the infinite-horizon regime is computationaly simpler than the finite-horizon regime solved in Theorem \ref{th:SDP}. In the proof of Theorem \ref{th:SDP_infinite}, Theorem \ref{th:structure} is used for the structure of the optimal policy, however, it is interesting to note that we also show that a time-invariant law is optimal while in Theorem \ref{th:SDP} the optimal policy is time-varying. The main idea to show this property is the convexity of the objective. In particular, one can use Jensen's inequality to show that the evaluation of the objective at the convex combination of the decision variables is smaller than the averaged sum of objectives at all times. This fact can be exploited in the infinite-horizon regime to show that the convex combination of the decision variables satisfies the \emph{stationary constraints} presented in Theorem \ref{th:SDP_infinite}. The proof of Theorem \ref{th:SDP_infinite} is given in Section \ref{subsec:proof_infinite}.

\section{Examples}\label{sec:numerical}
\subsection{Side information reduces the minimal directed information}
\begin{figure}[t]
    \centering
    \includegraphics[width = 0.48\textwidth]{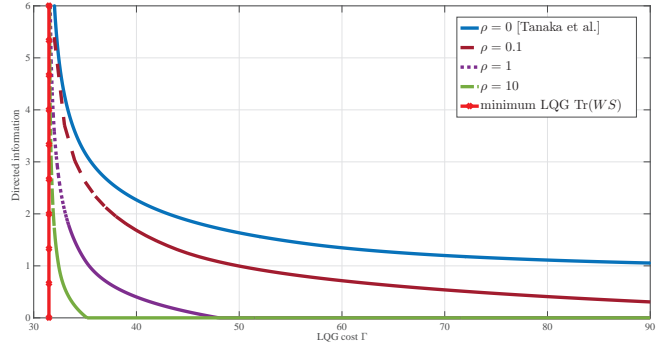}
    \caption{The trade-offs between the conditional directed information and the LQG cost when the SNR of the side information varies.}
    \label{fig:SDPCompare}
\end{figure}
In this section, we study a numerical example to show the benefits of side information and discuss the trade-offs between communication resources and control performance. We set the matrices $A, B, W, Q, R$ to be the same as those in \cite[Sec. V]{Tanaka_SDP}
{ \begin{align}
    A &=
    \begin{pmatrix}
    0.12 &0.63 &-0.52& 0.33\\
    0.26& -1.28 &1.57& 1.13\\
    -1.77& -0.30& 0.77& 0.25\\
    -0.16& 0.20& -0.58& 0.56
    \end{pmatrix}, \nn\\
    B &=
    \begin{pmatrix}
    0.66 &-0.58& 0.03 &-0.20\\
    2.61 &-0.91 &0.87 &-0.07\\
    -0.64& -1.12 &-0.19 &0.61\\
    0.93 &0.58 &-1.18 &-1.21
    \end{pmatrix}, \nn\\
    W&= \begin{pmatrix}
     4.94 &-0.10& 1.29& 0.35\\
    -0.10 &5.55 & 2.07 & 0.31\\
     1.29 &2.07 & 2.02 & 1.43\\
     0.35 & 0.31 & 1.43 & 3.10
    \end{pmatrix},
\end{align}}\normalsize
and the cost matrices $Q,R$ are set to be identity matrices.

We start by studying an LQG system in which the side information to the decoder is given by $C = I$ and $V = \frac{1}{\rho}I$ with $\rho > 0$, so that $\snr^{Y} = \rho I$. For each $\rho = 0.1$, $\rho= 1$ and $\rho = 10$, we solve~\eqref{eq:infinite_optimization} for each LQG cost constraint $\Gamma$ in the range $\Gamma\in [30, 90]$ and plot the optimal value of~\eqref{eq:infinite_optimization} as a function of $\Gamma$ in Fig.~\ref{fig:SDPCompare}. The case without side information studied in~\cite{Tanaka_SDP} can be equivalently viewed as the case with $\rho = 0$.

In Fig. \ref{fig:SDPCompare}, we can see that for any fixed $\Gamma$, the minimal conditional directed information decreases as $\rho$ (the signal-to-noise ratio of the side information) increases. The red vertical line corresponds to the minimal cost that can be attained with clean observation available at the controller. The intersection with the LQG constraint axis corresponds to the LQG cost that is achieved without communication, that is, using the side information only. It is also interesting to note that a fixed information level, the gain due to the presence of $\vy_t$ increases for an increasing control cost.

In all curves with side information, the minimal directed information converges to zero as the LQG cost increases to infinity. However, in the case without side information, the curve  converges to some constant known as the minimal rate needed to stabilize the system $A$ \cite{NairEvans04}. This rate can be computed as $R = \sum_i \log_2 \max \{1,|\lambda_i(A)|\}$, where $\lambda_i(\cdot)$ denotes the $i$th eigenvalue of its argument. The fact that the curves converge to zero follow from the detectability of the pair $(A,CV^{1/2})$ (indeed, $CV^{1/2}=\rho^{-1/2}I$ is a full-rank so that the pair is observable). We proceed to study a scenario in which the side information implies that the pair is not detectable.

\begin{figure}[t]
    \centering
    \includegraphics[width = 0.48\textwidth]{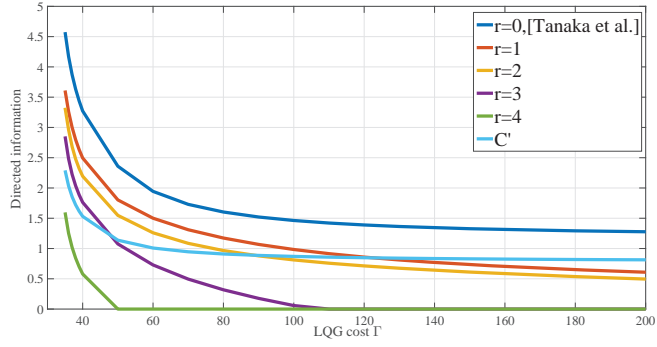}
    \caption{The trade-offs between the conditional directed information and the LQG cost when the SNR of the side information varies.}
    \label{fig:partial_obs}
\end{figure}
Here, we fix the side information variance to be the identity matrix $V =I$ (i.e., $\rho=1$), but change the observability matrix $C$ according to two scenarios. In the first, the matrix $C$ has dimensions $r\times 4$ for $0\le r\le 4$, and is given by $C(r) = [0_{r\times(4-r)},I_r]$. Clearly, if $r=0$, there is no side information, and if $r=4$ it is the full-observable matrix studied in Fig. \ref{fig:SDPCompare} with $\rho=1$. In the other case, we carefully choose $C$ to be orthogonal to one of the \emph{unstable eigenvectors} of $A$, i.e., the eigenvector whose corresponding eigenvalue is $\lambda_1 = -1.7124$. One choice of such a matrix is~$C' = \begin{pmatrix}
1&1&1&3.75\\
2.11&1&1&1\\
1&1&0&4.56\end{pmatrix}.$

In Fig. \ref{fig:partial_obs}, the minimal directed information is plotted as a function of the LQG cost $\Gamma$. As expected, it can be observed that the communication resources are decreasing as the side information dimension is increasing. For all observability matrices $C(r)$ with $1\le r\le 4$, the curves tend to zero as the cost $\Gamma$ grows to $\infty$. On the other hand, the curves that correspond to $r=0$ from \cite{Tanaka_SDP}, and the observability matrix $C'$ tend to a constant when the cost is large. This constant can be calculated as the minimal rate needed to stabilize the system. In the blue curve, it is $R = \sum_i \log_2 \max \{1,|\lambda_i(A)|\} = 1.1685$ and for $C'$ it is  $R' = \log_2 |\lambda_1(A)| = 0.776$ where $\lambda_1$ is the only unstable eigenvalue that cannot be observed via $C'$.

\subsection{Scalar systems}
For scalar systems, without the LQG measurement $(C=V=0)$, the solution to ~\eqref{eq:infinite_optimization} \cite{TaikondaSahaiMitter,Tanaka_SDP,silva_framework} is
\begin{align}\label{eq:scalr_NoSI}
    \frac{1}{2}\log\left(A^2 + \frac{W\Theta}{\Gamma - W\bar{S}}\right),\quad \forall~ \Gamma> W\bar{S},
\end{align}
where $\bar{S}$ is the unique solution to the Riccati equation and can be solved in closed-form as
\begin{align}\label{eq:scalarSbar}
    \bar{S} = \frac{(A^2 + B^2 - 1) + \sqrt{(A^2 + B^2 - 1)^2 + 4B^2}}{2B^2}.
\end{align}
In the following result, we provide a closed-form for the scalar problem. The proof is in~Section~\ref{sec:scalarSDP} below.
\begin{corollary}\label{co:scalarSDP}
When $A, B, W, C, V$ are scalars, $Q = R = 1$ and $|A|>1$, the optimal value of the optimization~\eqref{eq:infinite_optimization} is
\begin{align}\label{eq:scalar_sol1}
&\frac1{2} \log \left(A^2 + \frac{W\Theta}{\Gamma - W\bar{S}}\right) \nn\\
&\ - \frac{1}{2} \log \left( 1+ \snr^Y\left(W+\frac{A^2(\Gamma-W\bar{S})}{\Theta}\right)\right),
\end{align}
when $W\bar{S} < \Gamma \leq W\bar{S} + \Theta P^\star$; and is 0 when $\Gamma > W\bar{S} + \Theta P^\star$, where $P^\star$ is the unique positive solution to the quadratic equation
\begin{align}\label{eq:scalarPstar}
   A^2\snr^Y P^2 + (1 - A^2 + \snr^YW)P - W = 0
\end{align}
and $\bar{S}$ is given in~\eqref{eq:scalarSbar} and $\Theta = \frac{(AB\bar{S})^2}{1 + B^2\bar{S}}$.
\end{corollary}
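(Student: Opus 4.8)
The plan is to specialize the convex program \eqref{eq:infinite_optimization} to scalars and solve it in closed form by reducing it to a one-dimensional problem in $P$. First I would eliminate the auxiliary variable $\Pi$. Since the objective contains $-\tfrac12\log\Pi$, it is decreasing in $\Pi$, so at the optimum $\Pi$ is as large as the first LMI permits. In the scalar case that LMI is $\begin{bmatrix} P-\Pi & PA\\ AP & A^2P+W\end{bmatrix}\succeq 0$, whose determinant nonnegativity gives $\Pi\le \frac{PW}{A^2P+W}=(P^{-1}+A^2/W)^{-1}$, consistent with the remark following Theorem~\ref{th:SDP}. Setting $\Pi$ to this value and simplifying collapses the objective to
\begin{align}
f(P)=\tfrac1{2}\log\Big(A^2+\tfrac{W}{P}\Big)-\tfrac1{2}\log\big(1+\snr^Y(A^2P+W)\big).
\end{align}

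Next I would translate the two remaining constraints into an interval for $P$. The cost constraint $\Theta P + W\bar S\le \Gamma$ is simply $P\le \frac{\Gamma-W\bar S}{\Theta}$, feasible precisely when $\Gamma> W\bar S$; here $\bar S$ and $\Theta$ come from the scalar Riccati equation \eqref{eq:S_bar_riccati}, which factors into $B^2S^2-(A^2+B^2-1)S-1=0$, yielding \eqref{eq:scalarSbar} and $\Theta=\frac{(AB\bar S)^2}{1+B^2\bar S}$. For the observation LMI I would again use the determinant: nonnegativity of the $2\times 2$ block, after dividing by $V$ and using $\snr^Y=C^2/V$, reduces to $(A^2-1)P+W\ge P\,\snr^Y(A^2P+W)$, i.e.
\begin{align}
A^2\,\snr^Y P^2+(1-A^2+\snr^YW)P-W\le 0.
\end{align}
The left-hand side equals $-W<0$ at $P=0$ and tends to $+\infty$, so it has a single positive root $P^\star$ (eq. \eqref{eq:scalarPstar}), and the constraint is exactly $0\le P\le P^\star$. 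The hypothesis $|A|>1$ guarantees the diagonal entry $(A^2-1)P+W$ of this block stays positive, so the determinant condition is the only binding one.

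Finally I would optimize $f$ over $P\in(0,\min\{\tfrac{\Gamma-W\bar S}{\Theta},P^\star\}]$. A direct differentiation shows each summand of $f$ is decreasing in $P$ (the first because $A^2+W/P$ decreases, the second because $1+\snr^Y(A^2P+W)$ increases), so $f$ is strictly decreasing and its minimum is attained at the largest feasible $P$. This produces two regimes: when $\Gamma\le W\bar S+\Theta P^\star$ the cost constraint binds, $P=\frac{\Gamma-W\bar S}{\Theta}$, and substitution gives \eqref{eq:scalar_sol1}; when $\Gamma> W\bar S+\Theta P^\star$ the observation LMI binds, $P=P^\star$. The crux is the second regime: at $P=P^\star$ the quadratic vanishes, i.e. $(A^2-1)P^\star+W=P^\star\snr^Y(A^2P^\star+W)$, and dividing by $P^\star$ rearranges to $A^2+\tfrac{W}{P^\star}=1+\snr^Y(A^2P^\star+W)$, which is precisely the condition $f(P^\star)=0$. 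Hence the optimal value is $0$ beyond the threshold, as claimed. I expect the only delicate step to be recognizing that the tightness condition of the observation LMI coincides algebraically with $f=0$; the rest is the routine scalar reduction of the Schur complements and the monotonicity check.
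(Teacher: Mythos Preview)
Your proposal is correct and follows essentially the same route as the paper: eliminate $\Pi$ via the first LMI, reduce the remaining constraints to an interval $0<P\le\min\{(\Gamma-W\bar S)/\Theta,\,P^\star\}$, show the objective is monotone in $P$, and evaluate at the right endpoint. The only stylistic difference is that the paper packages the objective as $-\tfrac12\log g(P)$ and proves $g$ is increasing via the rewrite $g(P)=\tfrac1{A^2}\big(\snr^Y(A^2P+W)-\tfrac{W}{A^2P+W}-\snr^Y W+1\big)$, whereas you argue each summand of $f$ is decreasing directly; your version is arguably cleaner and bypasses that algebraic trick.
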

By comparing \eqref{eq:scalr_NoSI} and \eqref{eq:scalar_sol1}, the information gain due to the presence of the LQG measurement is the non-negative expression
\begin{align}
  \frac{1}{2} \log \left( 1+ \snr^Y\left(W+\frac{A^2(\Gamma-W\bar{S})}{\Theta}\right)\right).
\end{align}
Note that the gain is an increasing function of $\snr^Y$. Also, the gain is upper bounded by \eqref{eq:scalr_NoSI} which is achieved with equality when $\Gamma = W\bar{S} + \Theta P^\star$ since $P^\star$ satisfies $1+ \snr^Y(W+A^2P^\star) = A^2 + \frac{W}{P^\star}$ (see Eq. \eqref{eq:scalarPstar}).
\begin{remark}
In \cite{Kostina_SI_Allerton}, the rate distortion problem which corresponds to the control problem studied in this paper has been solved for the scalar case. To reveal  \cite[Th. $7$]{Kostina_SI_Allerton} from Corollary \ref{co:scalarSDP}, let $d\triangleq \frac{\Gamma - W\bar{S}}{\Theta}$ in order to write \eqref{eq:scalar_sol1} as
\begin{align}
  - \frac1{2} \log d \left(\snr^Y + \frac1{A^2 d+W}\right).
\end{align}
\end{remark}




\section{Proofs}\label{sec:proofs}
In this section, we prove our results. We start with Theorem~\ref{th:structure} on the optimal policy structure.
\subsection{Proof of Theorem \ref{th:structure} (Optimal policy structure)}
The proof follows from the following claims that will be shown consecutively thereafter.
\begin{enumerate}
    \item Instead of minimizing over stochastic kernels $P(\vu_t|\vu^{t-1},\vf^t,\vy^t)$ in \eqref{eq:dec}, it is sufficient to minimize over $\vu_t$ that is a deterministic function of $\vf^t,\vy^t$.
    \item The minimization domain is relaxed by allowing encoders of the form $P(\vf_t|\vy^t,\vx^t,\v{f}^{t-1})$ instead of $P(\vf_t|\vx^t,\v{f}^{t-1})$ (in \eqref{eq:enc}). That is, the new encoder has additional access to the observation $\vy_t$.
    \item It is sufficient to minimize the relaxed optimization problem over $P(\vf_t|\vy^t,\vx_t,\v{f}^{t-1})$, i.e., to let the encoder depend on $\vx_t$ rather the tuple $\vx^t$.
    \item It is sufficient to minimize the relaxed optimization problem over Gaussian encoder outputs, i.e,
\begin{align}\label{eq:f_full_dependence}
    \vf_t &= D_t\vx_t + \lambda_t \vy^{t} + \gamma_t \vf^{t-1} + \v{m}_t,
\end{align}
where $\v{m}_t\sim\mathcal{N}(0,M_t)$.
\item It is sufficient to minimize the relaxed optimization problem over
    \begin{align}\label{eq:f_proof_memoryless}
        \vf_t &= D_t\vx_t + \v{m}_t.
    \end{align}
\item  The optimal control is $\vu_t = -K_t\E[\vx_t|\v{f}^{t},\vy^t]$, where $K_t$ is the control gain.
\end{enumerate}
By claim $5$, the minimizer of the relaxed optimization problem is in the original minimization domain \eqref{eq:policy}. Thus, both optimization problems have a common minimizer, and $\vu_t$ is a composition of a Kalman filter and certainty equivalence controller.

\underline{Claim $1$:}
From the functional representation lemma \cite{ElGamal}, one can write $\vu_1 = \Lambda(\v{f}_1,\vy_1,W_1)$ for some deterministic function $\Lambda(\cdot)$ and random variable $W_1$ that is independent of $(\vf_1,\vx_1,\vy_1)$. Let $\tilde{\vf}_1 \triangleq (\vf_1,W_1)$, and note that~$I(\vx_1;\vf_1|\vy_1) = I(\vx_1;\tilde{\vf_1}|\vy_1)$.
Moreover, the joint distribution of $\vu_1$ and $\vx_1$ is unaffected by absorbing the controller's randomness to the encoder (stochastic) mapping so the LQG cost remains the same.
This procedure can be inductively repeated to de-randomize $\vu_t$ at all times.\\
\underline{Claim $2$:} Trivial, since the minimization domain is increased.\\
\underline{Claim $3$:} Consider a simple lower bound on the objective function,
\begin{align}\label{eq:proof_obj_LB}
    I( \vx^T\to\v{f}^T||\vy^T) &= \sum_t I(\vx^t;\v{f}_t|\vy^t,\v{f}^{t-1})\nn\\
    &\ge \sum_t I(\vx_t;\v{f}_t|\vy^t,\v{f}^{t-1}).
\end{align}
For a fixed sequence of deterministic mappings characterizing $\vu_t$, the lower bound \eqref{eq:proof_obj_LB} and the LQG cost are fully determined by $\{P(\vx_t,\vy^t,\vf^t)\}_{t\ge1}$.

We will now show by induction that $P(\vx_t,\vy^t,\vf^t)$ is determined by $\{P(\vf_i|\vy^i,\vx_i,\v{f}^{i-1})\}_{i\le t}$. For~$t=1$, this claim is trivial. For the inductive step, assume that $P(\vx_{t-1},\vy^{t-1},\vf^{t-1})$ is determined by $\{P(\vf_i|\vy^i,\vx_i,\v{f}^{i-1})\}_{i<t}$. Now, consider
\begin{align*}
P(\vx_t,\vy^t,\vf^t)
&= P(\vf_t|\vx_{t},\vy^{t},\vf^{t-1}) P(\vx_{t},\vy^{t},\vf^{t-1}),
\end{align*}
and note that $P(\vx_{t},\vy^{t},\vf^{t-1})$ can be written as
\begin{align*}
\int_{\vx_{t-1}}  P(\vy_t|\vx_t) P(\vx_t|\vx_{t-1},\vy^{t-1},\vf^{t-1})P(\vx_{t-1},\vy^{t-1},\vf^{t-1}),
\end{align*}
which is fixed by the sequence $\{P(\vf_i|\vy^i,\vx_i,\v{f}^{i-1})\}_{i<t}$
due to the measurement characteristics \eqref{eq:setting_SI}, the fact that $\vu_{t-1}$ is a deterministic function of $(\vy^{t-1},\vf^{t-1})$ and the induction hypothesis.\\
\underline{Claim $4$:}
First, the differential entropy from \eqref{eq:proof_obj_LB} is re-written as,
\begin{align}\label{eq:proof_entropy}
     & h(\vx_t|\vy^t,\v{f}^{t-1}) \nn\\
     &\ = h(\vy_t,\vx_t,\vx_{t-1}|\vy^{t-1},\v{f}^{t-1}) \nn\\
     &\ \ - h(\vy_t|\vy^{t-1},\v{f}^{t-1})- h(\vx_{t-1}|\vx_t,\vy^{t-1},\v{f}^{t-1}) \nn\\
     &\ = h(\vy_t,\vx_t|\vx_{t-1},\vu_{t-1}) + h(\vx_{t-1}|\vy^{t-1},\v{f}^{t-1}) \nn\\
     &\ \ - h(\vy_t|\vy^{t-1},\v{f}^{t-1})- h(\vx_{t-1}|\vx_t,\vy^{t-1},\v{f}^{t-1}).
\end{align}
We now lower bound the mutual information using \eqref{eq:proof_entropy},
\begin{align}\label{eq:proof_LB_2}
    &\sum_{t=1}^T I(\vx_t;\v{f}_t|\vy^t,\v{f}^{t-1})\nn\\
    &= \sum_{t=1}^T h(\vy_t,\vx_t|\vx_{t-1},\vu_{t-1}) + h(\vx_{t-1}|\vy^{t-1},\v{f}^{t-1}) \nn\\
    &- h(\vy_t|\vy^{t-1},\v{f}^{t-1})- h(\vx_{t-1}|\vx_t,\vy^{t-1},\v{f}^{t-1}) - h(\vx_{t}|\vy^{t},\v{f}^{t})\nn\\
    &= h(\vx_{1}) -  h(\vx_{T}|\vy^{T},\v{f}^{T})  + \sum_{t=1}^T h(\vy_t,\vx_t|\vx_{t-1},\vu_{t-1})  \nn\\
    &\ - h(\vy_t|\vy^{t-1},\v{f}^{t-1})- h(\vx_{t-1}|\vx_t,\vy^{t-1},\v{f}^{t-1})\nn\\
    &\ge h(\vx_{1}) -  h_{\PG}(\vx_{T}|\vy^{T},\v{f}^{T}) + \sum_{t=1}^T h(\vy_t,\vx_t|\vx_{t-1},\vu_{t-1}) \nn\\
    & - h_{\PG}(\vy_t|\vy^{t-1},\v{f}^{t-1})- h_{\PG}(\vx_{t-1}|\vx_t,\vy^{t-1},\v{f}^{t-1}),
\end{align}
where the inequality follows from $h_{P}(X)\le h_{\PG}(X)$ for any ${P}$ with the same covariance as ${\PG}$.

Conversely, the lower bound can be achieved by choosing $\vf_t$ with a Gaussian distribution. Specifically, for some fixed inputs $\{P(\vf_t|\vy^t,\vx_t,\v{f}^{t-1})\}_{t\ge1}$, a jointly Gaussian distribution is formed by borrowing the first and second order statistics of the joint. Let $D_t\vx_t + \lambda_t \vy^{t} + \gamma_t \vf^{t-1}$ be the linear minimum mean square estimator of $\vf_t$ and $\v{m}_t$ be its error covariance. Then, $\vf_t\sim\mathcal{N}(D_t\vx_t + \lambda_t \vy^{t} + \gamma_t \vf^{t-1},\v{m}_t)$, and it can be shown that the second-order statistics of $\{P(\vx_t,\vy^t,\vf^t)\}_{t\ge1}$ are unaffected since the relation between the random variables are all linear.

Finally, note that the LQG cost depends on  $\{P(\vu_t,\vx_t)\}_{t=1}^T$ via its second moments. Since the second moments of $\PG$ and $P$ are the same, the LQG cost is unaffected. To summarize, we showed that we may restrict the optimization domain to Gaussian inputs of the form $\vf_t = D_t\vx_t + \lambda_t \vy^{t} + \gamma_t \vf^{t-1} + \v{m}_t$ without loss of optimality.\\
\underline{Claim $5$:} By Claim $4$, the objective of the relaxed optimization problem can be written as:
\begin{align}
    &\sum_t I(\vx_t;D_t\vx_t + \lambda_t \vy^{t} + \gamma_t \vf^{t-1} + \v{m}_t|\vy^t,\v{f}^{t-1})\nn\\
    &\ \ \ = \sum_t I(\vx_t;D_t\vx_t + \v{m}_t|\vy^t,\v{f}^{t-1}),
\end{align}
where the equality follows since $\lambda_t \vy^{t} + \gamma_t \vf^{t-1}$ is constant when conditioned on $(\vy^t,\v{f}^{t-1})$. For the LQG cost, since $\vu_t$ is a deterministic function of $(\vy^t,\v{f}^{t-1})$, the effect of $\lambda_t \vy^{t} + \gamma_t \vf^{t-1}$ can be embedded into the controller's function.\\
\underline{Claim $6$:} In the previous steps, we showed that $\vf_t = D_t\vx_t + \v{m}_t$ is optimal. We now show that $\vu_t$ has no affect on the objective function. Therefore, for a fixed $\vf_t$, we have a classical LQG problem whose solution is just a Kalman filter with the control gain defined in \eqref{eq:control_gain}. Consider the objective:
\begin{align}
    &\sum_t I(\vx_t;\vf_t|\vy^t,\v{f}^{t-1})\nn\\
    & = \sum_t h(\vx_t|\vy^t,\v{f}^{t-1}) - h(\vx_t|\vy^t,\v{f}^{t})\nn\\
    &= \frac1{2}\sum_{t=1}^T \log \det(P^+_{t|t-1}) - \log \det(P_{t|t})\nn\\
    &\stackrel{(a)}= \frac1{2}\sum_{t=1}^T \log \det(P^+_{t|t-1}) + \log \det((P^+_{t|t-1})^{-1}+\snr^F_t)\nn\\
    &= \frac1{2} \sum_{t=1}^T \log \det(I + P^+_{t|t-1}\snr^F_t),
\end{align}
where $(a)$ follows from Lemma \ref{lemma:covariances}. Also, by Lemma \ref{lemma:covariances}, $P^+_{t|t-1}$ depends on the choice of $\snr_t^F$ only. Therefore, the objective is unaffected by $\vu_t$.
\subsection{Proof of Theorem \ref{th:SDP}}
Using Lemma \ref{lemma:covariances}, the optimization problem can be written as:
    \begin{align}\label{eq:proof_sdp_OP}
        &\min~\frac{1}{2}\sum_{t = 1}^T\log \det(P^+_{t|t-1}) - \log \det(P_{t|t}) \nn\\
        \text{s.t. } & \Trace(\Phi_1 P_{1|0}) + \sum_{t = 1}^T \Trace\left(\Theta_t P_t\right) + \Trace(S_tW_t) \leq \Gamma \nn\\
        &\ \ \ (P^+_{t|t-1})^{-1} = (P_{t|t-1})^{-1}+\snr^Y_t, t=1,\dots,T\nn\\
    &\ \ \ P^{-1}_{t|t}= (P^{+}_{t|t-1})^{-1} + \snr^F_t, t=1,\dots,T\nn\\
        &\ \ \ P_{t+1|t} =  A_{t}P_{t|t}A_{t}^{\tr} + W_{t},
    \end{align}
    where the minimization is over the covariance matrices $\{P^+_{t|t-1}\}_{t=1}^T$. We first rewrite the objective in a convex form, consider
\begin{align}
&  \frac{1}{2}\sum_{t = 1}^T\log \det(P^+_{t|t-1}) - \log \det(P_{t|t})\nn\\
&\ =  \frac{1}{2}\log \det(P^+_{1|0})  - \frac{1}{2}\log \det(P_{T|T}) \nn\\
&\ \ \ + \frac{1}{2}\sum_{t = 1}^{T-1}\log \det(P^+_{t+1|t}) - \log \det(P_{t|t}).
\end{align}
Each term in the sum can be written as
\begin{align}
&\ \log \det(P^+_{t+1|t}) - \log \det(P_{t|t})\nn\\
&\ \stackrel{(a)}=  - \log \det(I+P_{t+1|t}\snr^Y_t) + \log \det P_{t+1|t}P_{t|t}^{-1} \nn\\
&\ \stackrel{(b)}=  - \log \det(I+P_{t+1|t}\snr^Y_t) + \log \det W_t \nn\\
&\ \ + \log \det (P_{t|t}^{-1} + A_t^{\tr}W_t^{-1}A_t) \nn\\
&\ \stackrel{(c)}=  - \log \det(I+P_{t+1|t}\snr^Y_t) + \log \det W_t \nn\\ &\ + \inf_{\Pi_t}  \log \det \Pi^{-1}_t \nn\\
& \ \ \ \ \ \text{s.t. }  0\prec \Pi_t \preceq (P_{t|t}^{-1} + A_t^{\tr}W_t^{-1}A_t)^{-1}\nn\\
& \stackrel{(d)}=  \inf_{\Pi_t} - \log \det(I+P_{t+1|t}\snr^Y_t) - \log \det \Pi_t + \log \det W_t\nn\\
& \ \ \ \ \ \ \text{s.t. }  \begin{bmatrix} P_{t|t} - \Pi_t& P_{t|t}A_t^{\tr}\\A_tP_{t|t}& A_tP_{t|t}A_t^{\tr} +W_t
\end{bmatrix}\succeq0,  \Pi_t\succ0,
\end{align}
where $(a)$ follows from Lemma~\ref{lemma:covariances}, $(b)$ follows from Lemma~\ref{lemma:covariances} and Sylvester's determinant theorem, $(c)$ follows from introducing an auxiliary positive definite matrix $\Pi_t$ and from the monotonicity of $\log\det(\cdot)$ and, finally, $(d)$ follows from thematrix inversion lemma and Schur complement.

We will now convert the constraints to have a standard LMI form. First, note that the objective has no dependence on $\snr^F_t$. Thus, we can reduce this variable in the constraints of the optimization in \eqref{eq:proof_sdp_OP} as
\begin{align}
        (P^+_{t|t-1})^{-1} &= (P_{t|t-1})^{-1}+\snr^Y_t\nn\\
    P^{-1}_{t|t}&\succeq (P^{+}_{t|t-1})^{-1}\nn\\
    P_{t+1|t} &=  A_{t}P_{t|t}A_{t}^{\tr} + W_{t}.
\end{align}
The first two constraints can be combined as
\begin{align}
    P^{-1}_{t|t}&\succeq (P_{t|t-1})^{-1}+\snr^Y_t\nn\\
    & = (P_{t|t-1})^{-1} + C_t^{\tr}V_t^{-1}C_t.
\end{align}
By taking the inverse of both sides and applying the matrix inversion lemma, we can equivalently write the resulted inequality, using the Schur complement of a matrix, as
\begin{align}\label{eq:proof_SDP_LMI}
\Omega_t&=    \begin{bmatrix} P_{t|t-1} - P_{t|t}& P_{t|t-1}C_t^{\tr}\\C_tP_{t|t-1}& C_tP_{t|t-1}C_t^{\tr} +V_t
\end{bmatrix}\succeq0.
\end{align}
The derivation is completed by substituting in \eqref{eq:proof_SDP_LMI} $    P_{t+1|t} =  A_{t}P_{t|t}A_{t}^{\tr} + W_{t}$ for $t=1,\dots,T-1$.

To summarize, we showed that the optimization problem (up to the constant $\Lambda)$ is:
\begin{align}
        &\min_{\{P_{t|t}\}_{t=1}^T,\{\Pi_t\}_{t=1}^{T-1}}~ \Lambda -\frac{1}{2} \log \det(P_{T|T}) \nn\\
        &+  \frac{1}{2}\sum_{t=1}^{T-1}- \log \det(I+(\snr^Y_t)^{\frac1{2}}P_{t+1|t}(\snr^Y_t)^{\frac1{2}}) - \log \det \Pi_t \nn\\
        &\mspace{50mu}\text{s.t. }  \Trace(\Phi_1 P_{1|0}) + \sum_{t = 1}^T \Trace\left(\Theta_t P_t\right) + \Trace(S_tW_t) \leq d, \nn\\
        &\mspace{50mu}\begin{bmatrix} P_{t|t} - \Pi_t& P_{t|t}A_t^{\tr}\\A_tP_{t|t}& A_tP_{t|t}A_t^{\tr} +W_t
        \end{bmatrix}\succeq0,  \Pi_t\succ0,\nn\\
        &\mspace{50mu}\Omega_t\succeq0,
\end{align}
where $\Lambda = \frac{1}{2}\sum_{t = 1}^{T-1}\log \det W_t + \frac{1}{2}\log \det(P^+_{1|0})$. To obtain the closed form in Theorem \ref{th:SDP}, we substitute $(P_{1|0}^+)^{-1} = P_{1|0}^{-1} + \snr^Y_1$ and define $\Pi_T \triangleq P_{T|T}$.


\subsection{Proof of Theorem \ref{th:SDP_infinite}}\label{subsec:proof_infinite}
To simplify notation, we define the objective of the optimization problem as
\begin{align}
f_{s}(P)&\triangleq \frac1{2}\log\det W - \frac{1}{2} \log \det(I+(APA^T+W)\snr^Y) \nn\\
&\ \ + \frac{1}{2} \log \det (P^{-1} + A^{\tr} W^{-1}A),
\end{align}
and the constraints set as
\begin{align}\label{eq:proof_inf_D0}
\mathcal D_0 \triangleq \{P\succ 0| \Trace(\Theta P) + \Trace\left(W \bar{S}\right) \le D, \Omega(P)\succeq0 \ \},
\end{align}
where
\begin{align*}
\Omega(P)&\triangleq \begin{bmatrix} APA^{\tr} + W - P & (APA^{\tr} +W)C^{\tr}\\C(APA^{\tr} +W)& C(APA^{\tr} +W)C^{\tr} +V
\end{bmatrix}.
\end{align*}
We also define
\begin{align}\label{eq:proof_inf_RT}
    &R_T(P)\triangleq\frac1{T}\left.[\frac1{2}\log \det(I + (APA^{\tr}+W)\snr^Y) \right.\nn\\
&\  \ \left. - \frac1{2}\log \det(APA^{\tr}+W)\right.],
\end{align}
and present two technical lemmas needed for the proof of Theorem \ref{th:SDP_infinite}.
\begin{lemma}\label{lemma:inf_lim_exists}
For a sequence of matrices $\{P_{t|t}\}_{t\ge 1}$, let $\bar{P}_T\triangleq \frac{1}{T}\sum_{t=1}^T P_{t|t}$ be their uniform convex combination. Then, for any sequence $\{P_{t|t}\}_{t\ge1}$ that satisfies $\Omega_t\succeq 0$, there exists $\{T_i\}_{i\ge 1}$ such that $\lim_{i\to\infty}\bar{P}_{T_i} \in \mathcal D_0$.
\end{lemma}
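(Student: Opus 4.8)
The plan is to show that the Cesàro averages $\bar P_T$ of a feasible covariance sequence accumulate, along a suitably chosen subsequence, at a single matrix $P^\star$ satisfying the \emph{stationary} constraints that define $\mathcal D_0$ in \eqref{eq:proof_inf_D0}. Throughout I use that the given sequence $\{P_{t|t}\}$ is generated by a feasible policy, so that in addition to $\Omega_t\succeq0$ it obeys the normalized cost bound $\tfrac1T\sum_t\Trace(\Theta P_{t|t})+\Trace(W\bar S)\le D+o(1)$; the point is that both $\Omega_t$ and the cost are affine in the $P_{t|t}$, so averaging will commute with them. The argument has three ingredients: a compactness step, the averaging of the coupled matrix inequalities, and a careful subsequence choice to kill the boundary terms.

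\textbf{Boundedness.} First I would argue that $\{\bar P_T\}_T$ is bounded in the finite-dimensional space of symmetric PSD matrices. Feasibility bounds $\Trace(\Theta\bar P_T)$, which controls $\bar P_T$ along the range of $\Theta$. For the remaining directions I use $\Omega_t\succeq0$: a Schur complement on its (positive definite) lower-right block gives $P_{t|t}\preceq\bigl((AP_{t-1|t-1}A^{\tr}+W)^{-1}+\snr^Y\bigr)^{-1}$, i.e. the modes detectable through $C$ are contracted by the Kalman/Riccati map, while the standing assumption that $(A,Q^{1/2})$ is observable on the unit circle forces the unstable modes to be penalized by the cost. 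Combining these bounds the averages; granting this, Bolzano--Weierstrass yields a convergent subsequence, which I refine below.

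\textbf{Averaging the coupled LMIs.} The core difficulty is that $\Omega_t$ couples the \emph{two} time indices $P_{t-1|t-1}$ and $P_{t|t}$, so its average does not directly reproduce the one-matrix object $\Omega(\bar P_T)$. Since each block of $\Omega_t$ is affine in $(P_{t-1|t-1},P_{t|t})$ and the predicted covariances $AP_{t-1|t-1}A^{\tr}+W$ telescope into $A\bar P_TA^{\tr}+W$ up to boundary terms, I would establish
\begin{align*}
\frac1T\sum_{t=1}^T\Omega_t=\Omega(\bar P_T)+G\,E_T\,G^{\tr},\qquad G\triangleq\begin{bmatrix}I\\ C\end{bmatrix},\quad E_T\triangleq\tfrac1T\bigl(P_{1|0}-AP_{T|T}A^{\tr}-W\bigr).
\end{align*}
The left-hand side is $\succeq0$ as an average of PSD matrices, hence $\Omega(\bar P_T)\succeq-G\,E_T\,G^{\tr}$, and it remains only to annihilate the residual $E_T$, i.e. to force $\tfrac1T P_{T|T}\to0$ along the subsequence.

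\textbf{Subsequence choice and conclusion.} Here the subsequence is pinned down. Since $\Trace\bar P_T$ is bounded, one cannot have $\Trace P_{T|T}\ge\epsilon T$ eventually (that would drive $\tfrac1T\sum_{t\le T}\Trace P_{t|t}\to\infty$), so $\liminf_T\tfrac1T\Trace P_{T|T}=0$. I first thin to a subsequence on which $\tfrac1T\Trace P_{T|T}\to0$, so that $E_T\to0$, and then thin once more, using boundedness, so that $\bar P_{T_i}\to P^\star$; both limits hold on the final subsequence. Passing to the limit, continuity of $\Omega(\cdot)$ and closedness of the PSD cone give $\Omega(P^\star)\succeq0$; linearity of the trace gives $\Trace(\Theta P^\star)+\Trace(W\bar S)\le D$; and $P^\star\succeq0$ as a limit of PSD matrices (strict positivity, when needed, is inherited since $AP^\star A^{\tr}+W\succeq W\succ0$ keeps the predicted covariance nondegenerate for sequences of bounded directed-information cost). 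Hence $P^\star\in\mathcal D_0$. The genuine obstacle is \emph{not} the compactness but the index mismatch in $\Omega_t$: naive averaging leaves the residual $G\,E_T\,G^{\tr}$, and one must simultaneously keep $\bar P_T$ convergent and force the tail $\tfrac1T P_{T|T}$ to vanish, which is exactly what the two-stage extraction accomplishes.
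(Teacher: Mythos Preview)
Your argument is correct and follows the same skeleton as the paper's proof: average the time-varying LMIs, control the boundary residual, and extract a limit point via compactness. The one place you differ is in how the residual is handled, and there the paper's route is strictly simpler. In your identity
\[
\frac1T\sum_{t=1}^T\Omega_t=\Omega(\bar P_T)+G\,E_T\,G^{\tr},\qquad E_T=\tfrac1T\bigl(P_{1|0}-AP_{T|T}A^{\tr}-W\bigr),
\]
the tail contribution $-\tfrac1T AP_{T|T}A^{\tr}$ enters $E_T$ with a \emph{minus} sign, so $-G\,E_T\,G^{\tr}=\tfrac1T G\bigl(AP_{T|T}A^{\tr}+W-P_{1|0}\bigr)G^{\tr}\succeq \tfrac1T G\bigl(W-P_{1|0}\bigr)G^{\tr}$, a residual involving only the fixed matrices $W,P_{1|0}$ and hence vanishing as $1/T$ \emph{uniformly}. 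The paper exploits exactly this sign (it drops the analogous $+\tfrac1T P_{T+1|T+1}$ term), and thereby shows $\bar P_T\in\mathcal D_\epsilon$ for all large $T$ without any constraint on the growth of $P_{T|T}$; boundedness of $\mathcal D_\epsilon$ is then quoted from \cite{tanaka_inf}, and a limit point in $\cap_{\epsilon>0}\mathcal D_\epsilon=\mathcal D_0$ follows by compactness. Your two-stage subsequence extraction to force $\tfrac1T\Trace P_{T|T}\to0$ is therefore valid but unnecessary: the ``genuine obstacle'' you identify is not present, because the tail term already points the right way.
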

\begin{lemma}\label{lemma:negative}
Let $T_i$ be a sequence as in Lemma \ref{lemma:inf_lim_exists}, and $\{P_{T_i|T_i}\}_{t\ge1}$ is a sequence that satisfies the constraints in \eqref{eq:th_main}. Then, under the conditions of Theorem \ref{th:SDP_infinite},
\begin{align*}
    \limsup_{i\to\infty} R_{T_i}(P_{T_i|T_i})\ge0.
\end{align*}
\end{lemma}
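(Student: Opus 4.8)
The plan is to discard the (nonnegative) side-information contribution in $R_{T_i}$ and reduce the claim to a statement about the growth rate of $\log\det(AP_{T_i|T_i}A^{\tr}+W)$, which I then control using the boundedness of the Cesàro average of the error covariances guaranteed by Lemma~\ref{lemma:inf_lim_exists}. Concretely, since $\snr^Y\succeq0$ and $AP_{T_i|T_i}A^{\tr}+W\succ0$, writing the first determinant in symmetric form gives $\log\det(I+(AP_{T_i|T_i}A^{\tr}+W)\snr^Y)=\log\det(I+(\snr^Y)^{1/2}(AP_{T_i|T_i}A^{\tr}+W)(\snr^Y)^{1/2})\ge0$, so that
\begin{align}\label{eq:plan_lb}
R_{T_i}(P_{T_i|T_i})\ge -\frac{1}{2T_i}\log\det(AP_{T_i|T_i}A^{\tr}+W).
\end{align}
Thus it suffices to prove that $\tfrac{1}{T_i}\log\det(AP_{T_i|T_i}A^{\tr}+W)\to0$ along $\{T_i\}$; note that $\det(\cdot)$ is monotone on the positive-semidefinite cone, so $\det(AP_{T_i|T_i}A^{\tr}+W)\ge\det W$ already supplies the lower side, and only a matching upper bound remains.

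For that upper bound I would invoke the eigenvalue AM--GM inequality $\log\det X\le n\log(\Trace(X)/n)$ (valid for $X\succ0$ in the state dimension $n$), together with $\Trace(AP_{T_i|T_i}A^{\tr}+W)\le\|A\|^{2}\Trace(P_{T_i|T_i})+\Trace W$. The essential observation is that $P_{T_i|T_i}\succeq0$ is a single summand of the uniform convex combination $\bar P_{T_i}=\tfrac{1}{T_i}\sum_{t=1}^{T_i}P_{t|t}$, whose limit is finite by Lemma~\ref{lemma:inf_lim_exists}; hence $B\triangleq\sup_i\Trace(\bar P_{T_i})<\infty$ and, because all traces are nonnegative,
\begin{align}\label{eq:plan_trace}
\Trace(P_{T_i|T_i})\le\sum_{t=1}^{T_i}\Trace(P_{t|t})=T_i\,\Trace(\bar P_{T_i})\le B\,T_i .
\end{align}
Substituting \eqref{eq:plan_trace} yields $\tfrac{1}{T_i}\log\det(AP_{T_i|T_i}A^{\tr}+W)\le\tfrac{n}{T_i}\log\big((\|A\|^{2}BT_i+\Trace W)/n\big)\to0$, since the argument of the logarithm grows only linearly in $T_i$. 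Combined with \eqref{eq:plan_lb}, this gives $\liminf_i R_{T_i}(P_{T_i|T_i})\ge0$, and a fortiori $\limsup_i R_{T_i}(P_{T_i|T_i})\ge0$, which is the claim.

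The main obstacle is the control of the terminal covariance $P_{T_i|T_i}$ at the very endpoint of the horizon. A generic feasible sequence $\{P_{t|t}\}$ need not be bounded (unstable modes that are unobservable through both $\vy_t$ and $\vf_t$ can grow without bound, as the examples with a non-detectable $(A,CV^{1/2})$ illustrate), so pointwise boundedness of $P_{T_i|T_i}$ is false in general. What rescues the argument is that the endpoint trace is dominated by the entire Cesàro sum, whose convergence along $\{T_i\}$ is precisely the content of Lemma~\ref{lemma:inf_lim_exists}; this is exactly why the subsequence $\{T_i\}$ must be imported from that lemma rather than chosen at will, and it is the step where the stabilizability of $(A,B)$ and observability of $(A,Q^{1/2})$ in Theorem~\ref{th:SDP_infinite}---ensuring nonemptiness of $\mathcal D_0$ and the convergence in Lemma~\ref{lemma:inf_lim_exists}---are used. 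Everything else reduces to the elementary determinant inequalities above, so I expect the write-up to be short once \eqref{eq:plan_trace} is in place.
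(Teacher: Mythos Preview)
Your argument is correct, and at the key step it rests on the same fact the paper uses: the boundedness of the Ces\`aro average $\bar P_{T_i}$ along the subsequence furnished by Lemma~\ref{lemma:inf_lim_exists}. The paper's own proof first rewrites $R_{T_i}(P_{T_i|T_i})$ as the single expression $\tfrac{1}{2T_i}\log\det\bigl(P_{T_i+1|T_i}^{-1}+\snr^Y\bigr)$ and then defers to \cite[Lemma~3]{tanaka_inf}, together with the observation that the limiting covariance is bounded in every direction not orthogonal to $A$. You take a slightly different and more self-contained route: you discard the nonnegative $\snr^Y$ contribution first, reducing the claim to $\tfrac{1}{T_i}\log\det(AP_{T_i|T_i}A^{\tr}+W)\to 0$, and then handle that via the elementary chain $\Trace P_{T_i|T_i}\le T_i\,\Trace\bar P_{T_i}\le B\,T_i$ plus the eigenvalue AM--GM bound $\log\det X\le n\log(\Trace X/n)$. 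Your approach avoids the external citation and makes the role of the subsequence $\{T_i\}$ completely transparent; the paper's formulation keeps the $\snr^Y$ term, which could tighten the estimate but is immaterial for the nonnegativity of the $\limsup$.
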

The proofs of Lemma \ref{lemma:inf_lim_exists} and Lemma \ref{lemma:negative} appear below. We are now ready to prove the main result in this section.
\begin{proof}[Proof of Theorem \ref{th:SDP_infinite}]
The optimal structure for the policy derived in Theorem \ref{th:structure} is true for any time horizon. Therefore, we can utilize Theorem \ref{th:structure} to write the optimization problem over the time-varying decision variables policy as
\begin{align}\label{eq:proof_infi_form1}
        &\inf_{\{P_{t|t}\succ0\}_{t\in\mathbb{N}}}~ \limsup_{T\to\infty}\frac1{T}(\Lambda_T - \frac1{2}\log \det P_{T|T} ) \nn\\
        &\ + \frac{1}{2T}\sum_{t=1}^{T-1} \log \det (P_{t|t}^{-1} + A^{\tr} W^{-1}A) \nn\\
        & \ - \frac{1}{2T}\sum_{t=1}^{T-1} \log \det(I+(AP_{t|t}A^{\tr} + W)\snr^Y)  \nn\\
        &\text{s.t. } \quad \limsup_{T\to\infty} \frac1{T}\Trace(\Phi_1 P_{1|0}) \nn\\
        & \ + \frac1{T}\sum_{t = 1}^T \Trace\left(\Theta_t P_{t|t}\right) + \Trace(S_tW) \leq \Gamma, \nn\\
        &\Omega_t=    \begin{bmatrix} P_{t|t-1} - P_{t|t}& P_{t|t-1}C_t^{\tr}\\C_tP_{t|t-1}& C_tP_{t|t-1}C_t^{\tr} +V_t\end{bmatrix}\succeq0, \ t\in \mathbb{N},
\end{align}
where the constant matrices $\Phi $ and $\Theta_t$ are given in Theorem \ref{th:SDP}, and the constant $\Lambda_T$ is given by
\begin{align}
    \Lambda_T &= - \frac1{2}\log \det( P_{1|0}^{-1}+\snr^Y) + \frac{1}{2} \sum_{t=1}^{T-1}\log \det W.
\end{align}
By taking the limit in \eqref{eq:proof_infi_form1} over the time-independent quantities, we have $\frac1{T}\Lambda_T\to \frac1{2}\log\det W$ and
$\frac1{T}\Trace(\Phi_1 P_{1|0})\to 0$. The LQG cost constraint is simplified by noting that $S_t$ converges to the stabilizing solution of the Riccati equation in \eqref{eq:S_bar_riccati}, $\bar{S}$, by the assumption that the $(A,B)$ is stabilizable and $(A,Q^{1/2})$ is controllable on the unit circle. This in turn implies that $\Theta_t\to\Theta$.

Next, we define
\begin{align}
&f_{r,T}(\{P_{t|t}\}_{t=1}^T)\triangleq \frac1{2T}\sum_{t=1}^T \log \det (P_{t|t}^{-1} + A^{\tr} W^{-1}A) \\
& + \frac1{2}\log\det W - \frac1{2T} \sum_{t=1}^T \log \det(I+(AP_{t|t}A^T+W)\snr^Y),\nn
\end{align}
in order to compactly express the optimization problem as
\begin{align}\label{eq:proof_infi_form2}
        &\min_{\{P_{t|t}\}_{t\in\mathbb{N}}}~ \limsup_{T\to\infty} R_T(P_{T|T}) + f_{r,T}(\{P_{t|t}\}_{t=1}^T)\nn\\
        &\text{s.t. } \quad \limsup_{T\to\infty} \frac1{T}\sum_{t = 1}^T \Trace\left(\Theta P_{t|t}\right) + \Trace(\bar{S}W) \leq \Gamma, \nn\\
        &\Omega_t=    \begin{bmatrix} P_{t|t-1} - P_{t|t}& P_{t|t-1}C_t^{\tr}\\C_tP_{t|t-1}& C_tP_{t|t-1}C_t^{\tr} +V_t\end{bmatrix}\succeq0  \nn \\
        &P_{t|t-1}= A P_{t-1|t-1}A^T+W \ \ \forall t\in \mathbb{N},
\end{align}
where $R_T(\cdot)$ was defined in \eqref{eq:proof_inf_RT}.

We can now present the main steps that constitute the proof of the lower bound in Theorem \ref{th:SDP_infinite}:
\begin{align}\label{eq:proof_inf_derivation}
   &\min\limsup_{T\to\infty} R_{T} + f_{r,T}(\{P_{t|t}\}_{t=1}^{T})\nn\\
   &\ \ \stackrel{(a)}\ge \min\limsup_{i\to\infty} R_{T_i} + f_{r,T_i}(\{P_{t|t}\}_{t=1}^{T_i}) \nn\\
   &\ \ \stackrel{(b)}\ge \min\limsup_{i\to\infty} f_{r,T_i}(\{P_{t|t}\}_{t=1}^{T_i})\nn\\
   &\ \ \stackrel{(c)}\ge \min\limsup_{i\to\infty} f_{s}(\bar{P}_{T_i})\nn\\
   &\ \ \stackrel{(d)}\ge \min_{P\in\mathcal D_0} f_{s}(P),
\end{align}
where:
\begin{itemize}
    \item[(a)] follows relaxing to the limit supremum from $T_i$ to $T$;
    \item[(b)] follows from the non-negativity of $\limsup R_{T_i}$. shown below as Lemma \ref{lemma:negative};
    \item[(c)] follows from the convexity of the function $- \log \det(I+(APA^T+W)\snr^Y)-\log\det(P^{-1} + A^{\tr} W^{-1}A)$;
\item[(d)] follows from Lemma \ref{lemma:inf_lim_exists}.
\end{itemize}
Note that the left-hand side of \eqref{eq:proof_inf_derivation} is the optimization problem derived in \eqref{eq:proof_infi_form2}. Thus, we showed a single-letter lower bound to the optimization problem \eqref{eq:Optimization_def_inf}. Furthermore, for a matrix $P\in\mathcal D_0$, the time-invariant cartesian product $\bigotimes_{t\ge1}P$ satisfies the time-dependent constraints in \eqref{eq:proof_infi_form2} and the resulting objective in the optimization problem \eqref{eq:proof_infi_form2} is $f_s(P)$.
These steps conclude that \eqref{eq:infinite_optimization} is a lower to \eqref{eq:Optimization_def_inf}.

In the last step, we show that there exists a time-invariant policy that achieves the lower bound in \eqref{eq:proof_infi_form2}. Namely, we show that for each covariance matrix $P\in\mathcal D_0$ in \eqref{eq:proof_inf_derivation}, there exists a time-invariant policy that makes the objective in \eqref{eq:proof_infi_form2} equal to $f_s(P)$. To this end, we  construct a time-invariant policy given by the pair $(D,M)$ as follows: given $P$, compute $(D,M)$ matrices using the SVD decomposition  $D^{\tr}M^{-1}D = P^{-1} - (APA^{\tr}+W)^{-1} - \snr^{Y}$ with $M\succ0$. By construction, we have that $P$ is a solution to the Ricatti equation
\begin{align}\label{eq:inf_proof_Riccati}
    APA^{\tr} - P + W - APH^{\tr}(HPH^{\tr}+\tilde{V})^{-1}HPA^{\tr} &= 0,
\end{align}
where $H\triangleq\begin{bmatrix}
C \\
D
\end{bmatrix} , \tilde{V}\triangleq\begin{bmatrix}
V &0\\
0&M
\end{bmatrix}$. The closed-loop Riccati equation of \eqref{eq:inf_proof_Riccati} can be re-written as
\begin{align}
  (A-\Xi H)P(A-\Xi H)^{\tr} - P + W + \Xi \tilde{V}\Xi^{\tr}= 0,
\end{align}
where $\Xi= APH^{\tr}(HPH^{\tr}+\tilde{V})^{-1}$. From the assumption $W\succ 0$, the closed-loop system $(A-\Xi H)$ is stable. Therefore, $(A,H)$ is detectable and $P$ is the unique maximal solution to the Riccati equation. The detectability of $(A,H)$ and $P_{1|0}\succeq P$ \footnote{If this is not the case, one can increase the covariance by ignoring the measurements for several time instances} guarantee the convergence of the forward Riccati recursion
\begin{align}\label{eq:proof_inf_matrices_time}
        P_{t|t}&= AP_{t-1|t-1}A^{\tr} + W \nn\\
        &\ - AP_{t-1|t-1}H^{\tr}(HP_{t-1|t-1}H^{\tr}+\tilde{V})^{-1}HP_{t-1|t-1}A^{\tr}
\end{align}
to the maximal solution of the Riccati equation. The proof is completed by computing the limit in \eqref{eq:proof_infi_form2} using the convergence of \eqref{eq:proof_inf_matrices_time}. This concludes the proof of Theorem \ref{th:SDP_infinite}.
\end{proof}

\begin{proof}[Proof of Lemma \ref{lemma:inf_lim_exists}]
For $\epsilon>0$, define $\mathcal D_\epsilon \triangleq \{P\succ 0| \Trace(\Theta P) + \Trace(W\bar{S})\le \Gamma \ \text{and} \ \eqref{eq:inf_stationary_matri}\}$.
\begin{align}\label{eq:inf_stationary_matri}
&\begin{bmatrix} APA^{\tr} + W - P & (APA^{\tr} +W)C^{\tr}\\C(APA^{\tr} +W)& C(APA^{\tr} +W)C^{\tr} +V
\end{bmatrix}\succeq \begin{bmatrix} -\epsilon I&0\\0&0\end{bmatrix}
\end{align}
By the assumption, $\Omega_t \succeq0,$ for all $t\ge1$. Consider the time-invariant constraint
\begin{align}
 \Omega(\bar{P}_T)
&= \frac1{T} \sum_{t=1}^{T+1} \Omega_t \nn\\
&\ \ + \begin{bmatrix}  - \frac1{T} P_{1|0} + \frac1{T} P_{T+1|T+1} & - \frac1{T} P_{1|0}C^{\tr}\\ - \frac1{T} CP_{1|0}& - \frac1{T} CP_{1|0}C^{\tr} -\frac1{T} V
\end{bmatrix}\nn\\
&\succeq \begin{bmatrix}  - \frac1{T} P_{1|0} & - \frac1{T} P_{1|0}C^{\tr}\\ - \frac1{T} CP_{1|0}& - \frac1{T} CP_{1|0}C^{\tr} -\frac1{T} V
\end{bmatrix}.
\end{align}
By the boundedness of $P_{1|0}$ and $V$, for each $\epsilon>0$, there exists $T_\epsilon$ such that $\bar{P}_{T}\in\mathcal D_\epsilon$ for all $T\ge T_\epsilon$. The LQG constraint is trivially satisfied at $\bar{P}_T$ by the linearity of the trace operator. The boundedness of $\mathcal D_\epsilon$ and $\mathcal D_0$ follows from the boundeness of a larger set in \cite{tanaka_inf}. Therefore, these sets are compact, and there exists a limit point in $\cap_{\epsilon>0} \mathcal D_\epsilon$ equal to $\mathcal D_0$.
\end{proof}
\begin{proof}[Proof of Lemma \ref{lemma:negative}]
We can write
\begin{align}
R_{T_i}(P_{T_i|T_i}) &=
    \frac1{T_i} \left[\frac1{2}\log \det(I + (AP_{T_i|T_i}A^{\tr}+W)\snr_Y) \right. \nn\\
    &\ \left. - \frac1{2}\log \det(AP_{T_i|T_i}A^{\tr}+W)\right] \nn\\
    &= \frac1{T_i} \frac1{2} \log\det (P^{-1}_{T_i|T_i-1} + \snr^Y)^{-1}.
\end{align}
In order to show the non-negativity of the limit, one can follow the steps in \cite[Lemma $3$]{tanaka_inf} along with the fact that the limiting error covariance $\lim_{i\to\infty}\bar{P}_{T_i}$, must be bounded in all directions that are not orthogonal to $A$.
\end{proof}

\subsection{Proof of Corollary \ref{co:scalarSDP}}\label{sec:scalarSDP}
\begin{proof}
When $A, B, W, C, V$ are scalars and $Q = R = 1$, $\bar{S}, K, \Theta$ also become scalars and the optimization~\eqref{eq:infinite_optimization} reduces to
\begin{align}\label{eq:scalar_infinite_optimization}
        &\inf_{P,\Pi\in\mathbb{R}}~ \frac1{2} \log W - \frac{1}{2} \log (1+\snr^Y(A^2P+W)) \nn\\&\quad\quad\quad\quad - \frac{1}{2} \log \Pi \nn\\
        \text{s.t. } &\quad \Theta P + W \bar{S}\leq \Gamma, \nn\\
        &\quad\begin{bmatrix} P - \Pi& AP\\AP & A^2P +W
        &\end{bmatrix}\succeq0,\quad\Pi > 0\nn\\
        &\quad \begin{bmatrix} A^2P + W - P& C(A^2P + W)\\C(A^2P + W)& C^2(A^2P + W) +V
\end{bmatrix}\succeq 0.
\end{align}
Since $|A|>1$, we can easily verify using~\eqref{eq:scalarSbar} that $\bar{S} > 0$ (in fact, $\bar{S} > 1$) and thus $K > 0$ and $\Theta > 0$. To simplify the PSD constraints, notice that a $2\times 2$ matrix $X$ is positive semidefinite if and only if $\Trace(X)\geq 0$ and $\det(X)\geq 0$. With this observation, we can simplify the constraints further as
\begin{align}\label{eq:scalar_P}
        & 0 < P\leq \min\left\{\frac{\Gamma-W \bar{S}}{\Theta},~P^\star\right\}, \nn\\
        & 0 < \Pi \leq \min\left\{ P + A^2P +W, ~\frac{WP}{A^2P +W}\right\},
\end{align}
where $P^\star$ is defined as the unique positive solution to~\eqref{eq:scalarPstar}. Since
\begin{align}
    \frac{WP}{A^2P +W} \leq \frac{W}{A^2} \leq W < P + A^2P +W,
\end{align}
the constraint on $\Pi$ is further simplified to
$0< \Pi\leq \frac{WP}{A^2P +W}$.
Also notice that for any fixed feasible $P$, $\log \Pi$ is maximized at $\frac{WP}{A^2P +W}$. Then, the optimization problem is further simplified as
\begin{align}\label{eq:scalar_simplified_infinite_optimization}
        \inf_{P\in\mathbb{R}:~\eqref{eq:scalar_P}}~ \frac1{2} \log \left(A^2  + \frac{W}{P} \right) - \frac{1}{2} \log\left(1+\frac{C^2}{V}(A^2P+W) \right).
\end{align}
Let $g(P)$ be the function such that the objective function is written as $-\frac{1}{2}\log g(P)$, that is,
\begin{align}
    g(P) \triangleq \frac{P(1 + \frac{C^2}{V}(A^2P + W))}{A^2P +W}.
\end{align}
It then suffices to show that $g(P)$ is an increasing function in $P>0$. This can be accomplished by rewriting $g(P)$ as
\begin{align}
    g(P) = \frac{1}{A^2}\left(\frac{C^2}{V}(A^2P + W)- \frac{W}{A^2P + W} -\frac{C^2W}{V}+1 \right),
\end{align}
which is increasing in $P>0$ since $W>0$. Therefore, when $W\bar{S} < \Gamma \leq W\bar{S} + \Theta P^\star$, the optimal value is given by
\begin{align}
    -\frac{1}{2}\log g\left(\frac{\Gamma - W\bar{S}}{\Theta}\right),
\end{align}
which equals~\eqref{eq:scalar_sol1}. When $\Gamma > W\bar{S} + \Theta P^\star$, the optimal value is given by
\begin{align}
    -\frac{1}{2}\log g\left(P^\star\right).
\end{align}
Finally, one can verify that $g(P^\star) = 1$, which implies that the optimal value equals 0 for $\Gamma > W\bar{S} + \Theta P^\star$ using the fact that $P^\star$ is the solution to~\eqref{eq:scalarPstar}.
\end{proof}

\section{Conclusions}\label{sec:conclusions}
In this paper, we formulated and solved an optimization problem for the LQG setting with an additional communication link. We first showed that the optimal encoding realization is a memoryless Gaussian measurement of the state and the optimal control is the standard LQG control law. We then utilized the policy structure to show the main result that the minimization of the conditional directed information subject to a control constraint can be formulated as a standard convex optimization problem. For the finite-horizon regime, that convex optimization problem consists of a sequence of decision variables, while in the infinite-horizon regime, it simplifies to a single-letter optimization problem. The examples illustrate the benefits of the LQG setting with side information compared to the setting without side information even if the measurement has a low SNR.

\bibliography{Arxiv.bbl}
\bibliographystyle{IEEEtran}

\end{document}